\theoremstyle{plain}
\newtheorem{thm}{Theorem}[section]
\newtheorem{prop}[thm]{Proposition}
\newtheorem{lem}[thm]{Lemma}
\newtheorem{cor}[thm]{Corollary}
\theoremstyle{definition}
\theoremstyle{remark}
\newtheorem{rem}[thm]{Remark}
\numberwithin{equation}{section}
\def\hyp{\mathrm{hyp}}
\def\res{\mathrm{Res}}
\def\one{\mathbf{1}}
\def\t{\mathbf{t}}
\def\z{\mathbf{z}}
\def\w{\mathbf{w}}
\def\ztilde{\tilde{\mathbf{z}}}
\def\atilde{\tilde{\mathbf{a}}}
\def\wbar{\bar{w}}
\def\a{\mathbf{a}}
\def\b{\mathbf{b}}
\def\mutilde{\tilde{\mu}}
\def\zbar{\bar{z}}
\def\wbar{\bar{w}}
\def\disp{\displaystyle} 
\def\bra{\langle}
\def\ket{\rangle}
\def\phi{\varphi} 
\newcommand{\newoperator}[2]{\DeclareMathOperator{#1}{#2}}
\newoperator{\supp}{supp}
\newoperator{\tr}{Tr}
\newoperator{\re}{Re}
\newoperator{\im}{Im}
\newoperator{\sgn}{sgn}
\newoperator{\pf}{Pf}
\newoperator{\per}{per}
\newoperator{\var}{var}
\def\Z{\mathbb{Z}}
\def\E{\mathbb{E}}
\def\C{\mathbb{C}}
\def\D{\mathbb{D}}
\def\R{\mathbb{R}}
\def\P{\mathbb{P}}
\def\T{\mathbb{T}}
\def\X{\mathbb{X}}
\def\cZ{\mathcal{Z}}
\def\cU{\mathcal{U}}
\def\cH{\mathcal{H}}
\def\cP{\mathcal{P}}
\def\cS{\mathcal{S}}
\begin{document} 
\title[Spectral representation of correlation functions for
zeros]{Spectral representation of correlation functions for
zeros of Gaussian power series with stationary
coefficients}
\author[T.~Shirai]{Tomoyuki SHIRAI}
\date{
\noindent
{\the\year/\the\month/\the\day}}  
\address{Institute of Mathematics for Industry, Kyushu University, 744 Motooka, Nishi-ku, Fukuoka 819-0001, Japan}
\email{shirai@imi.kyushu-u.ac.jp} 
\subjclass[2020]{30B20; 30C15; 60G15; 60G55} 
\begin{abstract} 
We analyze Gaussian analytic functions (GAFs) defined as
 power series with coefficients modeled by discrete
 stationary Gaussian processes, utilizing their spectral
 measures. We revisit some limit theorems for random
 analytic functions and examine some examples of GAFs
 through numerical computations. Furthermore, we provide an
 integral representation of the $n$-point correlation
 functions of the zero sets of GAFs in terms of
 the spectral measures of the underlying coefficient Gaussian
 processes.
\end{abstract}
\maketitle 

\section{Introduction}  \label{sec:intro}

Gaussian analytic functions (GAFs) have attracted significant
interest and have been extensively studied by many
researchers
(cf. \cite{Kahane85,Sodin00,Sodin-Tsirelson-04, HKPV09} and
references therein). 
One of the striking results in the
study of GAFs is that the zeros of random power series $\sum_{n=0}^{\infty}
 \zeta_n z^n$ with i.i.d. complex Gaussian coefficients 
 $\{\zeta_n\}_{n=0}^{\infty}$, whose covariance kernel is
 the Szeg\H{o} kernel, form the determinantal point
 process (DPP) on the unit disk associated with the Bergman
 kernel $(1-z\wbar)^{-2}$ \cite{Peres-Virag05} (see for details on DPPs (cf. \cite{ST00, So00, ST03a, HKPV09}). 
As a natural generalization of this model, various related
models have been studied. 
The singular points of random matrix-valued Gaussian analytic
 functions are shown to form the DPPs associated with weighted Bergman kernel $(1-z\wbar)^{-\alpha}$ 
 \cite{K09}. 
The zeros of random power series $\sum_{n=0}^{\infty} 
 \zeta^{\R}_n z^n$ with i.i.d. \textit{real} Gaussian coefficients 
 $\{\zeta^{\R}_n\}_{n=0}^{\infty}$, which is the limiting
 Kac random polynomial, form the Pfaffian point
 process on the unit disk \cite{Forrester10, MS13}. 
The Gaussian Laurant series whose covariance kernel is 
the weighted Szeg\H{o} kernel on an annulus was studied in \cite{KS22a}. 
We are concerned with the zeros of Gaussian
power series $\sum_{n=0}^{\infty} \xi_n z^n$ with coefficients being
stationary, centered, complex Gaussian 
process $\{\xi_n\}_{n=0}^{\infty}$. 
In \cite{Noda-Shirai23, Shirai25+}, we discussed the expected number of
zeros within a disk and the density of zeros in the radial
direction, observing their asymptotic behavior as the
radius approaches the circle of convergence. 
Our observations reveal that the 
spectral measure of the coefficient Gaussian process plays a
crucial role in determining the asymptotic behavior of the
number of zeros near the radius of convergence. 
From the viewpoint of the spectral measure, the zeros of Gaussian
analytic functions in a strip in the complex plane, with
translation-invariant distribution, are studied in
\cite{Feldheim13,Feldheim-Dvora18}. 
This line of study traces back to the work of Paley and
Wiener \cite{PW87}.
In \cite{ANS17,BSW18, BBS21}, the authors investigate random
power series and their zeros from the perspective of
spectral measures (or limiting empirical spectral measures in the case
of deterministic coefficients), considering cases where the coefficients are not necessarily Gaussian processes but
weakly stationary processes, and even in certain cases where
the coefficients are deterministic. 

In this paper, we also emphasize the spectral perspective to study the zeros of GAFs.
The remainder of this paper is organized as follows. 
In Section~\ref{sec:GAFandspec}, we review the
Edelman-Kostlan formula for the $1$-point correlation
function and give its spectral representation. We also recall
some results of limit theorems for random analytic funcitons 
established in \cite{Shirai12}. 
In Section~\ref{sec:examples}, we explore examples of
GAFs associated with three different spectral measures and present some numerical
computations. In Section~\ref{sec:n-correlation}, we provide
a spectral representation of the $n$-point correlation
function for the zeros of a GAF whose covariance kernel is a
stationary complex Gaussian process. As an example of this 
represetation result, we compute certain quantities related to the
i.i.d. coefficients case to illustrate the Peres-Vir\'ag theorem.  

\section{Gaussian analytic functions and their spectral
 representation} \label{sec:GAFandspec}
\subsection{Spectral representation of GAF and the intensity
  of its zeros} 

Let $\Xi = \{\xi_k\}_{k \in \Z}$ be a stationary, centered, complex Gaussian process with covariance 
\[
 \gamma(k-l) = \E[\xi_k \overline{\xi_l}].  
\]
Throughout the paper, we assume that $\gamma(0)=1$, i.e., the
distribution of each marginal $\xi_k$ is 
complex standard normal. 
We consider the Gaussian power series with dependent
coefficients $\Xi=\{\xi_k\}_{k \in \Z}$ defined by  
\[
 X_{\Xi}(z) := \sum_{k=0}^{\infty} \xi_k z^k
\]
and its zeros. 
Since the radius of convergence of
$X_{\Xi}(z)$ is equal to $1$ a.s. under $\gamma(0)=1$, 
the zeros of $X_{\Xi}(z)$ form a point process
$\cZ_{X_{\Xi}} := \sum_{z \in \D : X_{\Xi}(z) =0} \delta_z$
on $\D:=\{z \in \C : |z| < 1\}$. 
We denote the covariance kernel of the GAF $\X = \{X_{\Xi}(z)\}_{z \in \D}$ by $K(z,w)$. 
Then, we have the spectral representation of $K(z,w)$
\cite{Shirai25+} as 
\begin{equation}
 K(z,w) = \E[X_{\Xi}(z) \overline{X_{\Xi}(w)}] 
= \int_{\T} s(z,t) \overline{s(w,t)} F(dt), 
\label{eq:Szw}
\end{equation}
where $\T=(-\pi, \pi]$, the kernel $s(z,t)$ is given by the
Szeg\H{o} kernel 
\begin{equation}
 s(z,t) = \frac{1}{1-ze^{-it}} \quad (z \in \D, t \in \T)  
\label{eq:kzt} 
\end{equation}
and $F(dt)$ is the spectral measure of the coefficient
Gaussian process $\Xi$ \cite{DM76}, i.e., 
$F(dt)$ is a \textit{probability} measure on $\T$ (since
$\gamma(0)=1$) such that 
\[
 \gamma(k) = \int_{\T} e^{-ikt} F(dt). 
\]
We denote the right-hand side of \eqref{eq:Szw} by $K_F(z,w)$. 
Since $|s(z,t)|^2 = (1-2r\cos (\theta-t) + r^2)^{-1}$ for $z=re^{i\theta}$, 
the covariance kernel $K_F(z,z)$ for $z = re^{i\theta}$ can be
expressed as 
\begin{equation}
 K_F(z,z) = \frac{1}{1-r^2} \int_{\T} P(r,\theta-t) F(dt),
\label{eq:KFzz} 
\end{equation}
where $P(r,\theta)$ is the Poisson kernel given by 
\begin{equation}
 P(r,\theta) = \frac{1-r^2}{1-2r \cos \theta +r^2}. 
\label{eq:poisson-kernel} 
\end{equation}

The first intensity (or $1$-point correlation function) 
of the zeros of a GAF can be computed from the
covariance kernel $K(z,w)$ by the
Edelman-Kostlan formula \cite{EK95} as follows: 
\begin{thm}[Edelman-Kostlan] \label{thm:edelman-kostlan}
Let $X=(X(z))_{z \in D}$ be a GAF with convariance $K(z,w) =
 \E[X(z)\overline{X(w)}]$. Then, the first intensity of
 zeros of $X(z)$ is given by the formula 
\[
 \rho_1(z) = \frac{1}{\pi} \partial_z \partial_{\wbar} \log
 K(z,w)\Big|_{w=z}. 
\]
\end{thm}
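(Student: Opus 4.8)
The plan is to derive the formula from the Poincar\'e--Lelong (complex Jensen) identity, which expresses the zero set of a holomorphic function as the distributional Laplacian of $\log|X|$. For any realization of $X$ that is not identically zero, one has $\Delta \log|X(z)| = 2\pi \sum_{\zeta : X(\zeta)=0} \delta_\zeta$ as distributions on $\D$ (counting multiplicities), because $\log|z-\zeta|$ is the fundamental solution of the Laplacian in the plane and $\log|X|$ is harmonic away from the zeros. Pairing both sides against a test function and taking expectations, the right-hand side becomes $2\pi$ times the first intensity, so that, as measures,
\[
 2\pi\, \rho_1(z)\, dA(z) = \Delta\, \E[\log|X(z)|]\, dA(z),
\]
where $dA$ denotes planar Lebesgue measure. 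It then suffices to evaluate $\E[\log|X(z)|]$ and apply $\Delta$.

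First I would compute $\E[\log|X(z)|]$ using that, for each fixed $z$, the value $X(z)$ is a centered complex Gaussian with variance $K(z,z)=\E[|X(z)|^2]$. Writing $X(z) = \sqrt{K(z,z)}\, W$ with $W$ a standard complex Gaussian gives
\[
 \E[\log|X(z)|] = \tfrac{1}{2}\log K(z,z) + \E[\log|W|],
\]
where $\E[\log|W|]$ is a universal constant independent of $z$. Since the Laplacian annihilates constants, applying $\Delta = 4\partial_z\partial_{\bar z}$ yields $2\pi\,\rho_1(z) = 2\,\partial_z\partial_{\bar z}\log K(z,z)$.

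It remains to rewrite the real mixed derivative in holomorphic/antiholomorphic form. Because $K(z,w)$ is holomorphic in $z$ and antiholomorphic in $w$, the function $L(z,\wbar):=\log K(z,w)$ depends holomorphically on $z$ and on $\wbar$ separately; setting $w=z$ and treating $z$ and $\bar z$ as independent variables, the chain rule gives $\partial_{\bar z}\log K(z,z) = (\partial_{\bar w}\log K)(z,z)$, and a further application of $\partial_z$ produces $\partial_z\partial_{\bar z}\log K(z,z) = \partial_z\partial_{\bar w}\log K(z,w)\big|_{w=z}$. Substituting this identity gives $\rho_1(z) = \frac{1}{\pi}\,\partial_z\partial_{\bar w}\log K(z,w)\big|_{w=z}$, as claimed.

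The main obstacle is rigorously justifying the exchange of expectation and the distributional Laplacian in the first step: one must verify that $\log|X(z)|$ is locally integrable with $\E[\log|X(z)|]$ finite, and that the pairing against test functions commutes with the expectation. This follows from the Gaussian computation above, which shows $\E[\log|X(z)|]$ is finite and smooth on the region where $K(z,z)>0$, together with a Fubini argument controlling the logarithmic singularities near the (a.s.\ isolated) zeros. Once this interchange is secured, the remaining steps are routine differential calculus.
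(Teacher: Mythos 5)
Your proposal is correct, and it is essentially the classical Edelman--Kostlan argument itself: the paper states Theorem~\ref{thm:edelman-kostlan} without proof, citing \cite{EK95}, and your route --- the Poincar\'e--Lelong identity $\Delta\log|X|=2\pi\sum_{\zeta:X(\zeta)=0}\delta_\zeta$, the Gaussian computation $\E[\log|X(z)|]=\tfrac12\log K(z,z)+\E[\log|W|]$ with the constant killed by $\Delta$, and the chain-rule identity $\partial_z\partial_{\bar{z}}\log K(z,z)=\partial_z\partial_{\wbar}\log K(z,w)\big|_{w=z}$ (valid since $K$ is holomorphic in $z$ and antiholomorphic in $w$) --- is exactly the standard proof found in that reference and in \cite{HKPV09}. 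Your closing remarks correctly identify and dispose of the only technical point (local integrability of $\log|X|$ and the Fubini interchange on the set where $K(z,z)>0$), so no gap remains.
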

When $K(z,w)$ is of the form \eqref{eq:Szw}, using the fact
that $\partial_z s(z,t) = e^{-it}s(z,t)^2$ and
the symmetrization of the variable $t$, we can see that 
\begin{equation}
 \rho_1(z) = \frac{1}{2\pi K(z,z)^2} \int_{\T^2} 
|e^{-it}-e^{-iu}|^2 |s(z,t)s(z,u)|^4
F(dt)F(du) \quad (z \in \D). 
\label{eq:spectral-rep-density}
\end{equation}
In Section~\ref{sec:n-correlation}, we will provide an integral
representation of the $n$-point correlation function of the zeros. 

\subsection{Convergence of random analytic functions and
  their zero processes}
Let $D\subset \C$ be a domain and $\cH(D)$ be the space of
complex analytic functions on $D$ equipped with the metric 
\[
 \rho(f,g) = \sum_{j=1}^{\infty} \frac{1}{2^j}
 \frac{\|f-g\|_{K_j}}{1+\|f-g\|_{K_j}},  
\] 
where $\|f\|_K$ is the supremum norm of $f$ on a compact set
$K$ and $K_1 \subset K_2 \subset \dots \nearrow D$ is an
exhaustion of $D$ by compact sets $K_j$. The metric space
$(\cH(D), \rho)$ turns out to be complete and separable. 
We denote the totality of probability
measures on $\cH(D)$ by $\cP(\cH(D))$. 
We call an $\cH(D)$-valued random variable a \textit{random analytic
function}. We recall the following fact established in 
\cite[Proposition 2.5 with Remark on page 341]{Shirai12}). 
\begin{prop}\label{prop:tightness}
Let $D\subset \C$ be a domain and $\{X_n\}_{n=1}^{\infty}$ be a sequence of random analytic
 functions on $D$. If $\sup_n \E[|X_n(z)|^p]$ is locally
 integrable on $D$ for some $p>0$, then $\{\mu_{X_n}\}_{n=1}^{\infty}$ is tight in
 $\cP(\cH(D))$. Moreover, if $\{X_n\}_{n=1}^{\infty}$
 converges to $X$ in the sense of finite-dimensional
 distributions, then $\{\mu_{X_n}\}$ converges weakly to $\mu_X$ as
 $n \to \infty$, or equivalently, $\{X_n\}_{n=1}^{\infty}$
 converges to $X$ in law. 
\end{prop}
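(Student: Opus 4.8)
The plan is to prove Proposition~\ref{prop:tightness} in two stages: first establishing tightness of $\{\mu_{X_n}\}$ in $\cP(\cH(D))$ under the local integrability hypothesis, and then upgrading finite-dimensional convergence to weak convergence. For tightness, the natural criterion on the Montel-type space $(\cH(D),\rho)$ is that a subset of $\cH(D)$ is precompact precisely when it is locally uniformly bounded (this is Montel's theorem: locally bounded families of analytic functions are normal). Thus I would aim to show that for every compact $K\subset D$ and every $\varepsilon>0$ there is a constant $M$ so that $\sup_n \P(\|X_n\|_K > M) < \varepsilon$, and then assemble these estimates across an exhaustion $K_1\subset K_2\subset\cdots$ to produce a single precompact set carrying all but $\varepsilon$ of the mass uniformly in $n$.

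The key step is controlling the supremum norm $\|X_n\|_K$ by the pointwise moments $\E[|X_n(z)|^p]$ that the hypothesis gives us. Here I would exploit analyticity: by the mean value property (or the Cauchy integral formula applied to a slightly larger compact set $K'$ with $K\subset \mathrm{int}(K')\subset K'\subset D$), the value $|X_n(z)|^p$ at any point of $K$ is dominated by an average of $|X_n|^p$ over a disk contained in $K'$. This subharmonicity of $z\mapsto |X_n(z)|^p$ converts a sup bound into an integral bound:
\[
 \sup_{z\in K} |X_n(z)|^p \le C_{K,K'} \int_{K'} |X_n(w)|^p \, dA(w),
\]
where $dA$ is area measure and $C_{K,K'}$ depends only on the geometry. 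Taking expectations and using Fubini together with the local integrability of $\sup_n \E[|X_n(w)|^p]$ over the compact set $K'$ yields a finite bound $\sup_n \E[\|X_n\|_K^p] < \infty$. A Markov inequality then gives the uniform tail estimate needed for tightness.

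For the second assertion, I would combine the tightness just established with the convergence in finite-dimensional distributions. Tightness gives relative compactness of $\{\mu_{X_n}\}$ in $\cP(\cH(D))$, so every subsequence has a further subsequence converging weakly to some limit $\nu$. The point is to identify $\nu=\mu_X$: since evaluation maps $f\mapsto (f(z_1),\dots,f(z_m))$ are continuous on $(\cH(D),\rho)$, weak convergence along the subsequence forces the finite-dimensional distributions of $\nu$ to agree with the finite-dimensional limits, which by hypothesis are those of $X$. Because finite-dimensional distributions determine a measure on $\cH(D)$ (the evaluation maps generate the Borel $\sigma$-algebra on this separable metric space), we get $\nu=\mu_X$ for every subsequential limit, and a standard subsequence argument then gives $\mu_{X_n}\Rightarrow \mu_X$.

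The main obstacle I expect is the first step's passage from pointwise moment control to a sup-norm moment bound: one must choose the auxiliary compact set $K'$ and the subharmonic averaging carefully so that the constant $C_{K,K'}$ is uniform in $n$ and the relevant integral $\int_{K'}\sup_n\E[|X_n(w)|^p]\,dA(w)$ is genuinely finite — this is exactly where local integrability (rather than mere finiteness at each point) is essential. The identification of subsequential limits in the second step is comparatively routine once one notes that cylinder sets based on evaluation maps form a convergence-determining class on the Polish space $\cH(D)$.
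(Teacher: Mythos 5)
Your proposal is correct, and it reconstructs essentially the argument behind the cited result: the paper itself gives no proof of Proposition~\ref{prop:tightness}, deferring to \cite[Proposition 2.5 and the Remark on p.~341]{Shirai12}, where tightness is obtained exactly as you do, via subharmonicity of $|X_n|^p$ (valid for every $p>0$) to convert pointwise moment bounds into sup-norm moment bounds on compacts, then Markov's inequality and Montel's theorem, with the limit identified through Prokhorov and the evaluation maps. The only cosmetic point is terminological: the cylinder sets need only be a \emph{separating} class (they determine the measure), which together with tightness suffices; calling them convergence-determining is stronger than what you use or need.
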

We also recall the convergence of zero processes \cite[Proposition 2.3]{Shirai12}. 
\begin{prop}\label{prop:zeros-converge}
A sequence of random analytic functions
 $\{X_n\}_{n=1}^{\infty}$ 
converges in law to $X$. Then, the zero process $\cZ_{X_n}$
 converges in law to $\cZ_X$ provided that $X \not\equiv
 0$ almost surely. 
\end{prop}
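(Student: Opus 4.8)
The plan is to deduce convergence of the zero point processes from convergence of the random analytic functions by combining Hurwitz's theorem with the continuous mapping theorem for weak convergence. The central object is the zero-counting map $Z \colon \cH(D) \to \mathcal{M}(D)$ sending an analytic function $f$ to the counting measure $\sum_{z : f(z)=0} \delta_z$ (with multiplicity), where $\mathcal{M}(D)$ denotes the space of locally finite integer-valued Radon measures on $D$ with the vague topology, the topology in which convergence in law of point processes is formulated. The key structural fact to establish is that $Z$ is continuous at every $f$ with $f \not\equiv 0$.

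First I would record that $Z$ is well defined and measurable on the open set $\{f \in \cH(D) : f \not\equiv 0\}$: for such $f$ the zeros are isolated, hence locally finite, and measurability follows from the argument principle, which expresses the number of zeros in any disk as a contour integral of $f'/f$ depending continuously, hence measurably, on $f$. On the complementary set $\{f \equiv 0\}$ the zero set is all of $D$ and $Z$ fails to be defined, which is precisely why the hypothesis $X \not\equiv 0$ a.s.\ is needed.

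Next I would establish continuity of $Z$ at a fixed $f \not\equiv 0$. Fix a test function $\phi \in C_c(D)$; it suffices to show $\int \phi \, dZ(f_n) \to \int \phi \, dZ(f)$ whenever $f_n \to f$ locally uniformly. The zeros of $f$ in the compact set $\overline{\supp \phi}$ are finite in number, say $z_1, \dots, z_m$ with multiplicities $k_1, \dots, k_m$. Around each $z_j$ I would choose a small disk $B_j$, pairwise disjoint and contained in $D$, on whose boundary $f$ does not vanish. By Hurwitz's theorem, equivalently by Rouch\'e applied on $\partial B_j$ where $\inf |f| > 0$, for all large $n$ the function $f_n$ has exactly $k_j$ zeros in $B_j$, and as the radius of $B_j$ shrinks these zeros are forced toward $z_j$; moreover $f_n$ has no zeros in $\overline{\supp \phi} \setminus \bigcup_j B_j$ for large $n$, since $f$ is bounded away from $0$ there. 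Letting the radii tend to $0$ and using the continuity of $\phi$, the weighted sums $\sum_{z : f_n(z)=0} \phi(z)$ converge to $\sum_{z : f(z)=0} \phi(z)$, which is the desired vague convergence.

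Finally I would assemble the pieces. The discontinuity set of $Z$ is contained in $\{f \equiv 0\}$, and by hypothesis $\P(X \equiv 0) = 0$, so $X$ lands in a set of full continuity measure; the continuous mapping theorem then gives $Z(X_n) \to Z(X)$ in law, i.e.\ $\cZ_{X_n} \to \cZ_X$ in law. Alternatively one may invoke the Skorokhod representation theorem to realize $X_n \to X$ almost surely in $\cH(D)$ on a common probability space and run the Hurwitz argument pathwise. The main obstacle is the careful localization in the Hurwitz step: one must track multiplicities correctly and choose the neighborhoods $B_j$ so that $f$ stays away from $0$ on their boundaries, which is exactly what guarantees that the total count is preserved in the limit and that no zeros escape from, or materialize outside, the support of $\phi$.
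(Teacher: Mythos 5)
Your proof is correct and follows the standard route: the paper itself states this proposition without proof, citing \cite[Proposition 2.3]{Shirai12}, and the argument given there is precisely your combination of Hurwitz's theorem (continuity of $f \mapsto \cZ_f$ in the vague topology at every $f \not\equiv 0$) with the continuous mapping theorem on the null discontinuity set $\{f \equiv 0\}$. The only cosmetic point is that before invoking the continuous mapping theorem you should extend $Z$ measurably to all of $\cH(D)$ (e.g.\ by setting $Z(0)$ equal to the zero measure), which is harmless since that set carries no mass under the law of $X$.
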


We apply these results to our current situation. 
Let $X_F = \{X_F(z)\}_{z \in \D}$ be the GAF corresponding
to the covariance kernel 
\begin{equation}
 K_F(z,w) = \int_{\T} s(z,t) \overline{s(w,t)} F(dt).  
\label{eq:cov=KF}
\end{equation}

\begin{cor}\label{cor:weak-conv}
Let $F_n(dt), n=1,2,\dots$ and $F(dt)$ be probability
measures on $\T$ and suppose that $\{F_n(dt)\}_{n=1}^{\infty}$
converges weakly to $F(dt)$.  
Then, $X_{F_n}$ corresponding to $F_n$ converges in law to
 $X_F$ corresponding to $F$. Moreover, the zero process
 $\cZ_{X_{F_n}}$ converges in law to that of $\cZ_{X_F}$. 
\end{cor}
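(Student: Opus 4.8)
The plan is to deduce Corollary~\ref{cor:weak-conv} from Proposition~\ref{prop:tightness} and Proposition~\ref{prop:zeros-converge} by verifying the hypotheses of the former for the family $\{X_{F_n}\}$. The natural strategy splits into three steps: first establish convergence of finite-dimensional distributions of $X_{F_n}$ to those of $X_F$; second verify the local integrability bound $\sup_n \E[|X_{F_n}(z)|^p]$ needed to apply Proposition~\ref{prop:tightness}; and third invoke Proposition~\ref{prop:zeros-converge} after checking $X_F \not\equiv 0$ almost surely.

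For the finite-dimensional convergence, the key point is that each $X_{F_n}$ is a \emph{centered complex Gaussian} field, so its law on any finite collection of points $z_1,\dots,z_m \in \D$ is determined entirely by the covariance matrix $\bigl(K_{F_n}(z_i,z_j)\bigr)_{i,j}$. I would therefore show that $K_{F_n}(z,w) \to K_F(z,w)$ for every fixed $z,w \in \D$. This follows directly from the weak convergence $F_n \Rightarrow F$ together with the representation \eqref{eq:cov=KF}: for fixed $z,w \in \D$ the integrand $t \mapsto s(z,t)\overline{s(w,t)}$ is a bounded continuous function on the compact space $\T$ (bounded because $|s(z,t)| \le (1-|z|)^{-1}$ uniformly in $t$), so by the definition of weak convergence the integrals converge. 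Convergence of the covariances of complex Gaussian vectors gives convergence in distribution of the finite-dimensional marginals.

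For the tightness hypothesis I would take $p=2$ and use $\E[|X_{F_n}(z)|^2] = K_{F_n}(z,z)$. From \eqref{eq:KFzz} and the fact that each $F_n$ is a probability measure, together with the uniform Poisson-kernel bound $P(r,\theta-t) \le \frac{1+r}{1-r}$ for $z=re^{i\theta}$, one obtains
\[
 \sup_n K_{F_n}(z,z) \le \frac{1}{1-r^2}\cdot\frac{1+r}{1-r} = \frac{1}{(1-r)^2},
\]
which is a fixed function of $r=|z|$, locally bounded (hence locally integrable) on $\D$. This uniform bound is what makes Proposition~\ref{prop:tightness} applicable with a single dominating estimate independent of $n$. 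Combining this with the finite-dimensional convergence, Proposition~\ref{prop:tightness} yields $X_{F_n} \to X_F$ in law in $\cH(\D)$.

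Finally, to pass to the zero processes via Proposition~\ref{prop:zeros-converge} I must check $X_F \not\equiv 0$ almost surely. This is immediate because $\E[|X_F(0)|^2] = K_F(0,0) = \int_\T |s(0,t)|^2 F(dt) = 1 > 0$ (using $s(0,t)=1$), so $X_F(0)$ is a nondegenerate Gaussian and in particular $X_F$ is not identically zero with probability one. The main obstacle, and the place requiring genuine care, is the finite-dimensional convergence step: one must be sure that weak convergence of the spectral measures transfers to convergence of \emph{all} entries of the covariance matrices simultaneously and that convergence of covariances indeed forces convergence in law of the complex Gaussian marginals. Both are standard once the boundedness and continuity of the integrand $s(z,t)\overline{s(w,t)}$ on $\T$ are noted, so the proof is largely a matter of assembling the stated propositions rather than any new analytic difficulty.
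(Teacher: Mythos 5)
Your proposal is correct and follows essentially the same route as the paper: bound $\E[|X_{F_n}(z)|^2]=K_{F_n}(z,z)\le (1-|z|)^{-2}$ uniformly in $n$ for tightness, deduce pointwise convergence $K_{F_n}(z,w)\to K_F(z,w)$ from weak convergence of $F_n$ and the bounded continuity of $t\mapsto s(z,t)\overline{s(w,t)}$ (hence convergence of the Gaussian finite-dimensional distributions), and then apply Propositions~\ref{prop:tightness} and~\ref{prop:zeros-converge}. Your explicit check that $X_F\not\equiv 0$ almost surely via $K_F(0,0)=1>0$ is a small detail the paper asserts without proof, but it is not a different method.
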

\begin{proof}
The local integrability of $\sup_n \E[|X_{F_n}(z)|^2]$
 follows from the fact that $\E[|X_{F_n}(z)|^2] = K_{F_n}(z,z) \le (1-|z|)^{-2}$
 for every $n$. Since $s(z,t)$ is bounded continuous in $t$
 for every $z \in \D$, by the weak convergence of
 $\{F_n\}_{n=1}^{\infty}$ to $F$, $K_{F_n}(z,w)$ converges to
 $K_{F}(z,w)$ for every $z,w \in \D$. This implies that
 $X_{F_n}$ converges to $X_{F}$ in the sense of 
 finite-dimensional distributions. 
Therefore, $X_F$ converges in law to $X$ by Proposition~\ref{prop:tightness}. 
The zero processes $\cZ_{F_n}, n=1,2,\dots$ converges in law
 to $\cZ_F$ Proposition~\ref{prop:zeros-converge} since $X_F
 \not\equiv 0$ almost surely. 
\end{proof}

\section{Examples} \label{sec:examples}
Using Propositions~\ref{prop:tightness}, \ref{prop:zeros-converge} and Corollary~\ref{cor:weak-conv},
we discuss three examples. 
\subsection{The case $F(dt) = dt/(2\pi)$} 
In this case, we easily see that $\gamma_F(k) = \delta_{k,0}$
and the kernel $K_F(z,w)$ is given by 
\[
K_F(z,w) = \int_{\T} s(z,t) \overline{s(w,t)} \frac{dt}{2\pi} = 
 \frac{1}{1-z\wbar} \quad 
 (z, w \in \D), 
\]
which is also called the Szeg\H{o} kernel. 
The kernel $s(z,t)$ is the boundary value of $(1-z\wbar)^{-1}$ at $w=e^{it}$. 
This spectral measure corresponds to the
i.i.d. coefficients case. 
This GAF is sometimes called the hyperbolic GAF and here denoted
by $X_{\hyp}(z)$. 
The zeros of $X_{\hyp}(z)$ form the DPP associated 
with Bergman kernel $\pi^{-1} (1-z\wbar)^{-2}$
\cite{Peres-Virag05}.  
In Subsection~\ref{subsec:hyperbolicGAF}, we will see 
a proof of this fact as an example of the spectral representation of 
$n$-point correlation function of the zeros of GAF (Theorem~\ref{thm:n-correlation}). 
The expected number of zeros in a domain $D \subset \D$ is
the maximum among the GAFs with the covariance of the form
\eqref{eq:cov=KF} \cite{Noda-Shirai23}. 
In Fig.~\ref{fig:fig1}, we show the zeros of the approximate polynomial $X_{\hyp}^{(N)}(z) := \sum_{k=0}^N \zeta_k
z^k$. The covariance function is given by 
\begin{equation}
 K_F^{(N)}(z,w) = \sum_{k=0}^N (z\wbar)^k =
 \frac{1-(z\wbar)^{N+1}}{1-z\wbar}. 
\label{eq:ex1cov} 
\end{equation}
By Theorem~\ref{thm:edelman-kostlan} with \eqref{eq:ex1cov}, the density of zeros inside $\D$ is given by 
\begin{equation}
\rho_{1,F}^{(N)}(z) 
= \frac{1}{\pi} \left\{
\frac{1}{(1-|z|^2)^2} - \frac{(N+1)^2|z|^{2N}}{(1-|z|^{2(N+1)})^2}
\right\}. 
\label{eq:rho1FN} 
\end{equation}
The expected number of zeros within $\D_r=\{z \in \C : |z|<r\}$ is 
\[
 \int_{\D_r} \rho_{1,F}^{(N)}(z) dz
= \int_0^{r^2} \Big(\frac{1}{(1-t)^2} - \frac{(N+1)^2
t^{N}}{(1-t^{N+1})^2} \Big) dt
= \frac{r^2}{1-r^2} - \frac{(N+1)r^{2(N+1)}}{1-r^{2(N+1)}}.  
\]
Then, by taking the limit $r \to 1$, we see that 
\[
 \int_{\D} \rho_{1,F}^{(N)}(z) dz = \frac{N}{2}. 
\] 
On average, half of the zeros of $X_{\hyp}^{(N)}(z)$
are inside $\D$, and half are outside. See the left panel of
Fig.~\ref{fig:fig1}. 
Since $K_F^{(N)}(z,z) \le K_F(z,z)$, 
$\sup_N \E{|X_{\hyp}^{(N)}(z)|^2}$ is locally bounded, and
also $K_F^{(N)}(z,w)$ converges to $K_F(z,w)$. Then, by
Propositions~\ref{prop:tightness} and \ref{prop:zeros-converge}, 
we have that $X_{\hyp}^{(N)}(z)$ converges to $X_{\hyp}(z)$ in law as $N
\to \infty$, and its zero process restricted on $\D$ converges in law to that
of $X_{\hyp}(z)$, the DPP associated with Bergman kernel.

\begin{figure}[htbp]
\begin{center}
\includegraphics[width=0.9 \hsize]{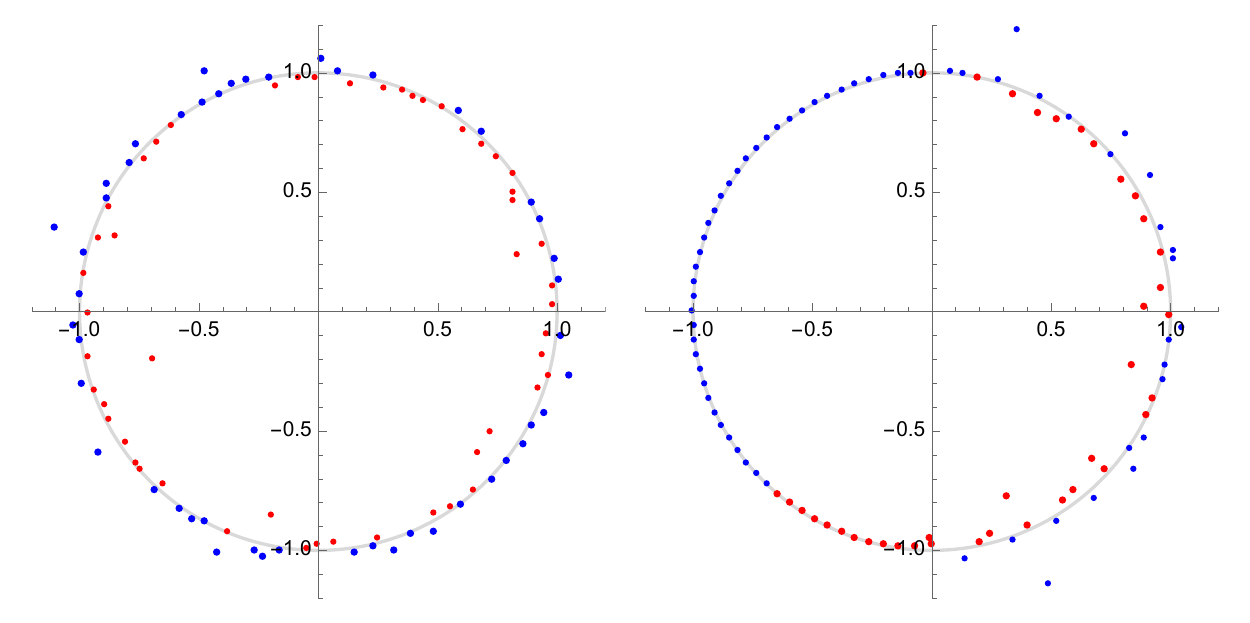}
\end{center}
\caption{Left: The zeros of the approximate polynomial $X_{\hyp}^{(100)}(z) =
 \sum_{k=0}^{100} \xi_k z^k$ for $F(dt) = dt/(2 \pi)$. Red points indicate zeros
 inside the unit disk, while blue points indicate zeros
 outside the unit disk. 
Right: The zeros of $X_F(z)$ for
 $F(dt)=\one_{[-\pi/2,\pi/2]}(t)dt/\pi$. The zeros in the right
 half are distributed randomly, similar to those in the left panel,
 while the zeros in the left half are neatly aligned near
 the unit circle. The latter corresponds to the zeros that might 
 disappear in the limit as $N \to \infty$, which is
 consistent with the fact that the density of the zeros on
 the left is $O(1)$.} 
\label{fig:fig1}
\end{figure}

\subsection{The case $F_n(dt) =\frac{1}{n}
  \sum_{k=0}^{n-1} \delta_{2k \pi/n}(dt)$}

We consider the case where $F_n(dt)$ is atomic and 
supported on the $n$-points $\{e^{2k\pi i/n}\}_{k=0}^{n-1}$. 
In this case, 
\[
 \gamma_{F_n}(k) \equiv 
\begin{cases}
 1 & \text{$k \equiv 0 \mod n$} \\
 0 & \text{$k \not\equiv 0 \mod n$} 
\end{cases} 
\iff  \xi_{pn+k} = \xi_k
\]
where $\xi_0, \xi_1, \dots, \xi_{n-1}$ are
independent, and $\{\xi_k\}_{k=0}^{\infty}$ is periodic with
period $n$. 
The covariance kernel of $X_{F_n}(z)$ is given by 
\begin{equation}
 K_{F_n}(z,w) = \frac{1}{(1-z^n)(1-\wbar^n)} \cdot
 \frac{1-(z\wbar)^n}{1-z\wbar}. 
\label{eq:ex2cov} 
\end{equation}
It is easy to see that $X_{F_n}(z)$ is meromorphic
and its meromorphic extension is expressed as  
\begin{equation}
 X_{F_n}(z) = \frac{1}{1-z^n} \sum_{k=0}^{n-1} \xi_k z^k. 
\label{eq:XFn} 
\end{equation}
Therefore, $X_{F_n}(z)$ has poles at 
$z=e^{2\pi i k/n} \ (k=0,1,\dots,n-1)$ and the zeros 
at those of the random polynomial $\sum_{k=0}^{n-1} \xi_k
z^k$ in $\C$, which is the same as in the previous example
with $N=n-1$. 
This is evident from the form of $X_{F_n}(z)$ in
\eqref{eq:XFn}, but when applying the Edelman-Kostlan
formula to \eqref{eq:ex2cov}, the preceding factor in
\eqref{eq:ex2cov} vanishes due to the differentiation
$\partial_z \partial_{\wbar}$ and does not
contribute, so the density of zeros inside $\D$ remains the same as in
the previous example and is given \eqref{eq:rho1FN} with $N=n-1$. 
Half of zeros of the meromorphic extension appear inside
$\D$ on average. 
We consider the finite approximation $X_{F_n}^{(N)}(z)$ up to the degree $nN-1$ 
\[
X_{F_n}^{(N)}(z) = \sum_{k=0}^{nN-1} \xi_k z^k
= \frac{1-z^{nN}}{1-z^n} \sum_{k=0}^{n-1} \xi_k z^k 
= (1-z^{nN}) X_{F_n}(z). 
\]
Therefore, there are zeros $\exp(2\pi i k /(nN)),
 k=0,1,\dots,nN-1$ on the unit circle as well as the zeros
 of $X_{F_n}(z)$. 
These virtual zeros that appear in the finite
 approximation are not observed in the limit $N \to
 \infty$. 

As $n \to \infty$, 
$F_n(dt) = \frac{1}{n} \sum_{k=0}^{n-1} \delta_{2k \pi
 /n}(dt)$ converges to 
the Lebesgue probability measure $F(dt) =
 dt/(2\pi)$.
By Corollary~\ref{cor:weak-conv}, $X_{F_n}$ on
 $\D$ converges in law to $X_F = X_{\hyp}$ as $n \to \infty$.

\subsection{The case $F(dt) = \one_{[-\pi/2, \pi/2]}(t) dt/\pi$} 
When $F(dt) = \one_{[-\pi/2, \pi/2]}(t) dt/\pi$, the covariance
function is given by 
\[
 \gamma_F(k) = 
\begin{cases}
 1 & k=0, \\
\frac{2}{\pi} \frac{\sin(k \pi /2)}{k} & k \not=0. 
\end{cases}
\]
In this case, the coefficient Gaussian process $\{\xi_k\}_{k \ge 0}$
exhibits long-range correlation. 
Indeed, it is known that a stationary Gaussian process $\{\xi_k\}_{k
\in \Z}$ is purely-nondeterministic if and only if $F(ds)$
is absolutely continuous and its density $f(s)=F(ds)/ds$ satisfies 
$\int_{-\pi}^{\pi} \log f(s)ds > -\infty$, which clearly fails in
this case. This example is also discussed in \cite[Example 2.4]{Shirai25+}. 

From \eqref{eq:KFzz}, the covariance kernel is given as the harmonic extension of
the spectral density $f(s)$, i.e., for $z=re^{i\theta}$,  
\begin{align*}
K_F(z,z) 
 &= \frac{1}{1-r^2}\frac{1}{2\pi} \int_{-\pi/2}^{\pi/2} 
 P(r,\theta-t) dt \\
&= \frac{1}{1-r^2} \left\{\frac{1}{2} + \frac{1}{\pi} \arctan\Big( 
\frac{2r}{1-r^2} \cos \theta\Big)\right\},  
\end{align*}
where $P(r,\theta)$ is the Poisson kernel in
\eqref{eq:poisson-kernel}.  
Here we carefully used the formula 
\[
 \frac{d}{dt} 2 \arctan\left(\frac{1+r}{1-r}
 \tan\frac{t}{2} \right)
= P(r,t). 
\]
By using \eqref{eq:spectral-rep-density}, 
it is not difficult to see that 
\begin{align*}
\rho_1(re^{i\theta}) 
&= \frac{1}{\pi(1-r^2)^2} \times 
\left\{1 -
\frac{\frac{\left(1-r^2\right)^2}{1+2 r^2 \cos 2\theta
+r^4}}{\left(\frac{\pi}{2}+\arctan \left(\frac{2r}{1-r^2}
 \cos \theta\right)\right)^2}\right\} \\
&=\begin{cases}
\frac{1}{\pi(1-r^2)^2} \times 
\disp \left\{1 - \frac{1}{4\pi \cos^2 \theta}(1-r^2)^2\right\} 
+ O((1-r^2)^3) & \theta \in (-\frac{\pi}{2}, \frac{\pi}{2}) \\[2mm]
\disp 1 - (\frac{2}{\pi})^2 & \theta =\pm \frac{\pi}{2} \\[2mm]
\disp \frac{1}{12 \cos^2 \theta} (1-r^2)^2 + O((1-r^2)^3) &
   \theta \in
 (-\pi, \pi) \setminus [-\frac{\pi}{2}, \frac{\pi}{2}], 
\end{cases}
\end{align*}
as $r \to 1$. See Fig.~\ref{fig:fig2} for
$\rho_1(re^{i\theta})$. 
\begin{figure}[htbp]
\begin{center}
\includegraphics[width=0.7\hsize]{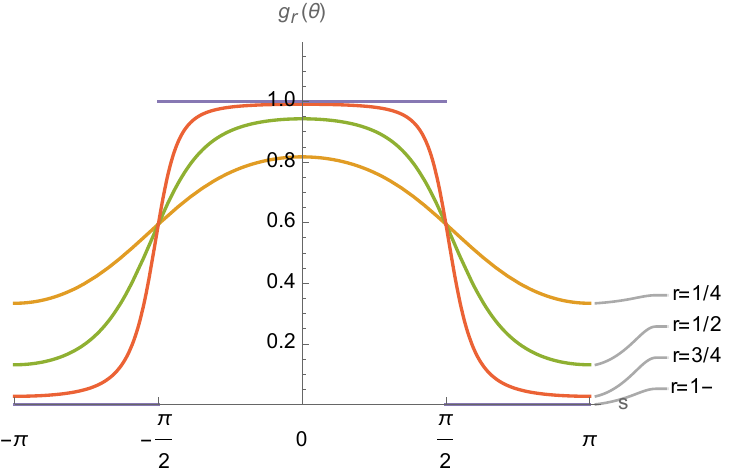}
\caption{The graph of $g_r(\theta) = \pi(1-r^2)^2 \rho_1(r e^{i\theta})$ for $r=1/4, 1/2,
 3/4, 1-$. The common intersection point of all the graphs is
 $1-(2/\pi)^2 \approx 0.594715$ at $\theta=\pm \pi/2$. }
\label{fig:fig2} 
\end{center}
\end{figure}
While the expected number of zeros is $O((1-r^2)^{-2})$ as $r \to 1$, 
it is extremely small; indeed, $O(1)$ in the direction of $e^{i\theta}$ with $\theta \in (-\pi,\pi)
\setminus [-\pi/2,\pi/2]$, where the spectral density
vanishes. This implies that the boundary behavior of the
$1$-intensity of zeros heavily depends on the boundary value
of the spectral density \cite{Shirai25+}. 
See the right panel in Fig.~\ref{fig:fig1}. 

For the simulation shown in the right panel of 
Fig.~\ref{fig:fig1}, we approximate $X_F(z)$ using the diagonalization of the
coefficient Gaussian process. 
We take the unique positive-definite square root $A$ of the positive-definite,
symmetric matrix $K_F = (\gamma(i-j))_{i,j = 0}^{\infty}$, 
i.e.,  $K_F = A^2$. Then, 
$\Xi = \{\xi_i\}_{i=0}^{\infty}$ can be constructed as
follows: 
\[
\xi_i := \sum_{j=0}^{\infty} A_{ij} \zeta_j, 
\]
where $\{\zeta_j\}_{j=0}^{\infty}$ are i.i.d complex
standard normal. 
Here, we approximate $X_F(z) = \sum_{i=0}^{\infty} \xi_i
z^i$ by the double sum 
\[
 X_F^{(N)}(z) = \sum_{i=0}^N \left(\sum_{j=0}^N A_{ij} \zeta_j\right) z^i
= \sum_{j=0}^N \zeta_j \left(\sum_{i=0}^N
A_{ij}z^i\right). 
\]
It is easy to see that the covariance kernel is given by 
\[
 K_F^{(N)}(z,z) = \sum_{i,k=0}^N (A|_{[0,N]}^2)_{ik} z^i
 \zbar^k,  
\]
where $A|_{[0,N]}$ is the submatrix whose rows and columns are
restricted on the indices $\{0,1,\dots, N\}$. Let $P^{(N)}$ be the
projection operator that maps $(x_0,x_1,\dots)$ to
$(x_0,x_1,\dots,x_N, 0,0,\dots)$, i.e., the restriction onto
the set $\{0,1,\dots,N\}$. Then,  
\[
K_F^{(N)}(z,z) = \bra P^{(N)} A P^{(N)} A P^{(N)} \z, \z \ket =
\|P^{(N)}AP^{(N)} \z\|^2 \le \|A \z\|^2 = K_F(z,z), 
\]
where $\z=\{1,z,z^2,\dots\}$ with $z \in \D$, from which we
can see that $\sup_N
E[|X_F^{(N)}(z)|^2] = \sup_N K_F^{(N)}(z,z)$ is locally
integrable. Since $K_F^{(N)}(z,w) \to K_F(z,w)$ for every
$z,w\in \D$, by Propositions~\ref{prop:tightness} and 
\ref{prop:zeros-converge}, we have that $\{X_F^{(N)}(z)\}_{z
\in \D}$ converges in law to $\{X_F(z)\}_{z \in \D}$, and
hence the zero process $\cZ_{X_F^{(N)}}$ converges 
in law to $\cZ_{X_F}$ as $N \to \infty$. 

The zeros of $X_F^{(N)}(z)$ were computed repeatedly for $N =
200$ as shown in Fig.~\ref{fig:fig3}. 
In the left half-plane, the zeros are almost evenly
distributed near the unit circle, whereas in the right
half-plane, the zeros are more randomly distributed, similar
to the previous examples. 
On average, the number of zeros inside and outside the unit
disk is approximately equal both in the right and left
half-plane. 
\begin{figure}[htbp]
\begin{center}
\includegraphics[width=1\hsize]{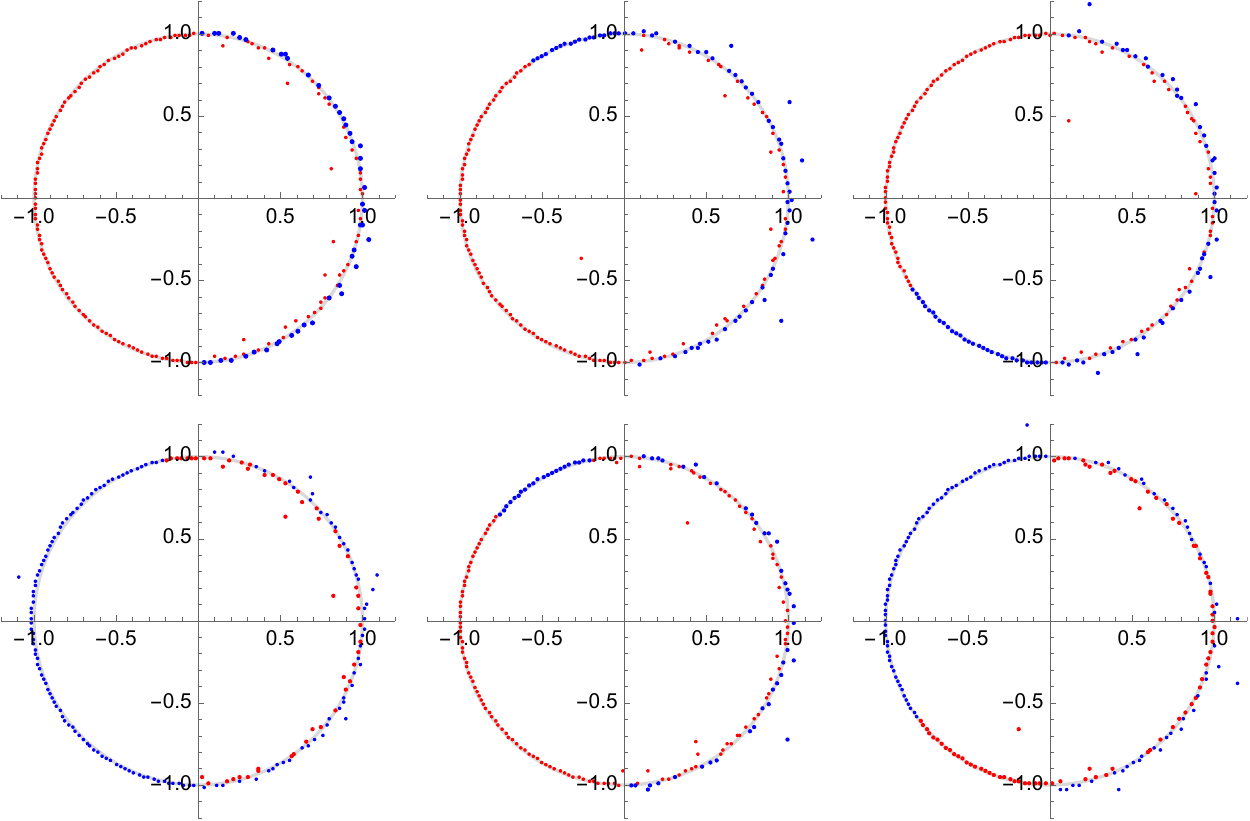}
\end{center}
\caption{The zeros of $X_F^{(N)}(z)$ were computed for $N
 =200$. The red points indicate zeros inside the unit disk,
 while the blue points indicate zeros outside the unit
 disk. The patterns of red and blue points seem to random. }
\label{fig:fig3}
\end{figure}
\begin{figure}[htbp]
\begin{center}
\includegraphics[width=1\hsize]{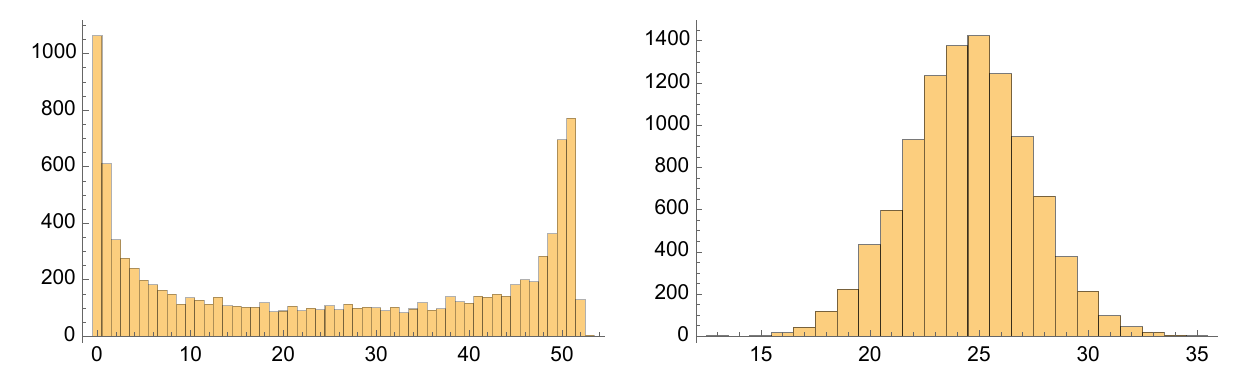}
\end{center}
\caption{The zeros of $X_F^{(100)}(z)$ are computed $10,000$ 
 times. The left (resp. right) panel shows the number of zeros inside the
 unit disk in the left-half (resp. right-half) plane. } 
\label{fig:fig4}
\end{figure}

However, the variance for the left-half
plane appears to be larger than that for the right half-plane. 
To see this situation further, we computed the zeros of $X_F^{(100)}(z)$ $10,000$
 times. The left (resp. right) panel of Fig.~\ref{fig:fig4} shows the number of zeros inside the
 unit disk in the left-half (resp. right-half) plane. 
The central limit theorem seem to hold for the number of zeros
 inside $\D$ in the right-half plane. On the other hand, 
the number of zeros inside $\D$ in the left-half plane
 behaves differently.

\section{Spectral representation of $n$-point correlation function}\label{sec:n-correlation}

Using the Edelman-Kostlan formula, the 1-correlation
function is expressed in terms of the covariance kernel. In
Section~2, we provided an integral representation \eqref{eq:spectral-rep-density} of the
$1$-correlation function involving the spectral measure
$F(dt)$. In this section, we extend this approach to derive an 
integral representation of the $n$-point correlation function for
the zeros of the GAF, also in terms of the spectral measure
$F(dt)$. 

We define the conditional kernel $K^a(z,w)$ by 
\begin{equation}
 K^a(z,w) = K(z,w) - \frac{K(z,a)K(a,w)}{K(a,a)}
= \frac{1}{K(a,a)} 
\begin{vmatrix}
 K(z,w) & K(z,a) \\
 K(a,w) & K(a,a)
\end{vmatrix}
\label{eq:palm_kernel}
\end{equation}
whenever $K(a,a)>0$, and inductively, $K^{a_1,a_2,\dots,a_n}(z,w)
= (K^{a_1,a_2,\dots,a_{n-1}})^{a_n}(z,w)$. Note that
$K^{a_1,a_2,\dots,a_n}(z,w)$ can also be represented as the ratio
of determinants, i.e., 
\begin{equation}
K^{a_1,a_2,\dots, a_n}(z,w) 
= \Big(\det(K(a_j,a_k))_{j,k=1}^n \Big)^{-1}
\det(K(a_j,a_k))_{j,k=0}^n\Big|_{\text{$a_0=z$ or $w$}},  
\label{eq:palm_kernel-2}
\end{equation}
where ``$a_0=z$ or $w$'' means that $K(a_0,a_k) = K(z,a_k)$ and 
$K(a_j,a_0) = K(a_j,w)$ for $j,k=1,2,\dots,n$, and
$K(a_0,a_0) = K(z,w)$. When we denote by $\cH_K$ the reproducing kernel
Hilbert space corresponding to $K$,
$\cH_{K^{a_1,\dots,a_n}}$ is given by 
\[
\cH_{K^{a_1,\dots,a_n}} = \{f \in \cH_K : f(a_j) = 0 \text{
for $j=1,2\dots,n$}\}. 
\]
This corresponds to the conditional GAF $\{X(z)\}_{z \in
\D}$ given that $X(a_1)=X(a_2)= \cdots = X(a_n)=0$. 

Now we recall the following formula for the $n$-point correlation
function of the zeros of GAF (cf. \cite[Corollary 3.4.2]{HKPV09}). 
For our convenience, we use the same formula in a slightly
different form as presented in \cite[Proposition 6.1]{Shirai12}, 
\begin{prop}\label{prop:prop6.1}
The $n$-point correlation function of the zeros of GAF with covariance kernel $K(z,w)$ 
is given by the formula 
\[
 \rho_n(z_1,z_2,\dots,z_n) = \frac{\per(\partial_z \partial_{\wbar} K^{z_1,z_2,\dots,z_n}(z_j,z_k))_{j,k=1}^n}{\det (\pi K(z_j,z_k))_{j,k=1}^n} 
\]
for distinct $z_1,z_2,\dots,z_n \in D$ with 
$\det (K(z_j,z_k))_{j,k=1}^n > 0$, where $\per A$ 
is the permanent of an $n$ by $n$ matrix 
$A = (a_{jk})_{j,k=1}^n$ defined by 
\[
 \per A = \sum_{\sigma \in \cS_n} \prod_{j=1}^n a_{j \sigma(j)}, 
\]
where $\cS_n$ is the symmetric group of order $n$.
\end{prop}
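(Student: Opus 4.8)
The plan is to prove the formula through the complex Kac--Rice formula, which reduces the computation of $\rho_n$ to Gaussian moment identities. Write $V=(X(z_1),\dots,X(z_n))$ and $W=(X'(z_1),\dots,X'(z_n))$. Since $X$ is analytic and a.s.\ not identically zero, its zeros are a.s.\ simple and the real Jacobian of the map $z\mapsto X(z)$ at a zero equals $|X'(z)|^2$. First I would establish the representation
\[
\rho_n(z_1,\dots,z_n) = \E\big[\,|X'(z_1)|^2\cdots|X'(z_n)|^2 \,\big|\, V=0\,\big]\cdot p_V(0),
\]
where $p_V$ is the density of the complex Gaussian vector $V$; formally this is the expectation of $\prod_j|X'(z_j)|^2\prod_j\delta(X(z_j))$, and I would make it rigorous by replacing each $\delta$ with an approximate identity and passing to the limit, using the non-degeneracy hypothesis $\det(K(z_j,z_k))_{j,k=1}^n>0$ to keep the conditional densities under control.

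The two Gaussian ingredients are then computed as follows. The vector $V$ is centered complex Gaussian with covariance $A=(K(z_j,z_k))_{j,k=1}^n$, so its density at the origin is $p_V(0)=(\pi^n\det A)^{-1}$, which is exactly the reciprocal of the denominator $\det(\pi K(z_j,z_k))_{j,k=1}^n=\pi^n\det A$. For the conditional expectation, the joint law of $(V,W)$ is centered complex Gaussian with block covariance $\left(\begin{smallmatrix}A & B^*\\ B & C\end{smallmatrix}\right)$, where $B_{jk}=\E[X'(z_j)\overline{X(z_k)}]$ and $C_{jk}=\E[X'(z_j)\overline{X'(z_k)}]$; hence $W$ conditioned on $\{V=0\}$ is centered complex Gaussian with covariance given by the Schur complement $M:=C-BA^{-1}B^*$. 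The key identification is that $M_{jk}=\partial_z\partial_{\wbar}K^{z_1,\dots,z_n}(z_j,z_k)$: differentiating the Schur-complement form of the conditional kernel in \eqref{eq:palm_kernel}--\eqref{eq:palm_kernel-2} and using $\partial_zK(z,w)=\E[X'(z)\overline{X(w)}]$ and $\partial_{\wbar}K(z,w)=\E[X(z)\overline{X'(w)}]$ reproduces precisely $C_{jk}-(BA^{-1}B^*)_{jk}$.

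With these in hand, the last step is the permanent. Writing $Y_j$ for the components of $W$ under the conditional law, the $Y_j$ form a centered circularly-symmetric complex Gaussian vector with $\E[Y_j\overline{Y_k}]=M_{jk}$ and $\E[Y_jY_k]=0$. Wick's theorem then gives
\[
\E\Big[\prod_{j=1}^n|Y_j|^2\Big]=\sum_{\sigma\in\cS_n}\prod_{j=1}^n\E[Y_j\overline{Y_{\sigma(j)}}]=\per(M_{jk})_{j,k=1}^n,
\]
since only the pairings of each $Y_j$ with a single conjugate $\overline{Y_k}$ survive (the mixed moments vanish by circular symmetry), and it is exactly this structure that produces a permanent rather than a determinant. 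Substituting into the representation from the first step yields the claimed formula.

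The hard part will be the rigorous justification of the first step rather than the Gaussian algebra. Converting the formal $\delta$-function identity into a genuine pointwise formula requires a.s.\ simplicity of the zeros, the a.s.\ non-vanishing of $X$, and uniform control of the smoothed conditional Jacobian as the mollification is removed --- all of which rest on the non-degeneracy condition $\det(K(z_j,z_k))_{j,k=1}^n>0$. Once the Kac--Rice representation is secured, the remaining computations (the Gaussian density at the origin, the Schur-complement identification, and the Wick/permanent expansion) are essentially bookkeeping.
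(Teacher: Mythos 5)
Your proposal is correct and follows essentially the same route as the paper, which derives the formula from the Kac--Rice/Hammersley representation $\rho_n = \E\bigl[\prod_j |X'(z_j)|^2 \,\big|\, X(z_1)=\dots=X(z_n)=0\bigr]\, p_V(0)$ together with the fact that the second moment of a product of centered complex Gaussians equals the permanent of their covariance matrix (the paper simply cites \cite[Corollary 3.4.2]{HKPV09} and \cite[Proposition 6.1]{Shirai12} and sketches exactly these two ingredients). Your additional details --- the mollification argument for the Kac--Rice step, the explicit Schur-complement identification of $\partial_z\partial_{\wbar}K^{z_1,\dots,z_n}(z_j,z_k)$ with the conditional covariance, and the density factor $p_V(0)=(\pi^n\det A)^{-1}$ producing the denominator --- correctly flesh out what the paper leaves implicit.
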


This formula follows from the fact that 
\begin{align*}
\lefteqn{\rho_n(z_1,z_2,\dots,z_n)}\\
&= \E[|X'(z_1)X'(z_2)\cdots X'(z_n)|^2 \
 | \ X(z_1)=X(z_2)=\cdots =X(z_n)=0] 
\end{align*}
and the second moment of the product of \textit{complex} Gaussian random
variables is equal to the permanent of the covariance
matrix, i.e., if $Y=(Y_1,Y_2,\dots,Y_n)$ is a centered
Gaussian vector with covariance matrix $K=(K_{jk})_{j,k=1}^n$, then 
\[
 \E[|Y_1Y_2\cdots Y_n|^2] = \per K. 
\]

For $\a=(a_1,\dots,a_n)$, let $V(\a)$ be the Vandermonde determinant defined by 
\[
V(\a) = V(a_1,a_2,\dots,a_n) 
:= \det(a_j^{k-1})_{j,k=1}^n= \prod_{1\le j < k \le n} (a_k - a_j).  
\]
Later, we will also use it in the form 
\begin{equation}
\prod_{j,k=1 \atop{k\not=j}}^n (a_k-a_j) = (-1)^{{n \choose
 2}} V(\a)^2. 
\label{eq:prodap-aj} 
\end{equation}
We recall the Cauchy determinant identity 
\begin{equation}
 \det\Big(\frac{1}{1-a_jb_k}\Big)_{j,k=1}^n
= V(\a) V(\b) \prod_{j,k=1}^n \frac{1}{1-a_jb_k}. 
\label{eq:cauchy_identity}
\end{equation}
For $\a=(a_1,a_2,\dots,a_n)$, $\t=(t_1,t_2,\dots,t_n)$, and
$\z=(z_1,z_2,\dots,z_m)$ (possibly with different $n$ and $m$), we define 
\[
 D(\a,\t) = \det(s(a_j,t_k))_{j,k=1}^n, \quad 
S(\z,\t) = \prod_{j=1}^m \prod_{k=1}^n s(z_j,t_k). 
\]
In what follows in this section, the elements of $\a$ and $\z$ are assumed
to belong to $\D$. 

From \eqref{eq:kzt} and the Cauchy identity \eqref{eq:cauchy_identity}, we have 
\begin{equation}
 D(\a,\t) = V(\a) V(e^{-it_1},\dots,e^{-it_n}) S(\a,\t).  
\label{eq:cauchy_identity2}
\end{equation}

\begin{lem}\label{lem:detK} 
For $\a=(a_1,a_2,\dots,a_n)$ and $\t=(t_1,t_2,\dots,t_n)$,
 we have 
\begin{equation}
\det(K(a_j,a_k))_{j,k=1}^n 
= |V(\a)|^2 \mu_{\a}(\T^n), 
\label{eq:detS}
\end{equation}
where 
\begin{equation}
\mu_{\a}(d\t) = \frac{1}{n!} 
|V(e^{-it_1}, e^{-it_2},\dots, e^{-it_n})|^2
\cdot |S(\a,\t)|^2 F^{\otimes n}(d\t). 
\label{eq:muadt}
\end{equation}
\end{lem}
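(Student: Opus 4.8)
The plan is to evaluate the Gram determinant $\det(K(a_j,a_k))_{j,k=1}^n$ directly from the spectral representation $K(a_j,a_k) = \int_{\T} s(a_j,t)\overline{s(a_k,t)}\,F(dt)$ by means of the Andr\'eief--Heine (Gram) identity, which rewrites the determinant of a matrix of inner products as a symmetrized multiple integral of a product of two determinants. First I would set $\phi_j(t) = s(a_j,t)$ and $\psi_k(t) = \overline{s(a_k,t)}$, so that $K(a_j,a_k) = \int_{\T}\phi_j(t)\psi_k(t)\,F(dt)$ is exactly the $(j,k)$ entry to which the identity applies. Since each $a_j \in \D$, the function $s(a_j,\cdot)$ is bounded and continuous on $\T$, and $F$ is a probability measure, so all the integrals converge absolutely and the identity is legitimately applicable.

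Applying the identity then yields
\[
\det(K(a_j,a_k))_{j,k=1}^n
= \frac{1}{n!}\int_{\T^n} \det(s(a_j,t_l))_{j,l=1}^n \, \det(\overline{s(a_k,t_l)})_{k,l=1}^n \, F^{\otimes n}(d\t).
\]
The second determinant is the complex conjugate of the first, so the integrand equals $|D(\a,\t)|^2$, where $D(\a,\t)=\det(s(a_j,t_l))_{j,l=1}^n$. The remaining steps are purely algebraic: I substitute the Cauchy-type factorization $D(\a,\t) = V(\a)\,V(e^{-it_1},\dots,e^{-it_n})\,S(\a,\t)$ from \eqref{eq:cauchy_identity2}, take the modulus squared, and pull the $\t$-independent factor $|V(\a)|^2$ outside the integral. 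What remains inside the integral is exactly $\frac{1}{n!}|V(e^{-it_1},\dots,e^{-it_n})|^2\,|S(\a,\t)|^2\,F^{\otimes n}(d\t) = \mu_{\a}(d\t)$, giving $\det(K(a_j,a_k))_{j,k=1}^n = |V(\a)|^2\,\mu_{\a}(\T^n)$ as asserted.

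I expect very little to obstruct this argument: once the Andr\'eief identity is invoked and the factorization \eqref{eq:cauchy_identity2} is in hand, the computation is mechanical. The only points deserving a moment's care are (i) confirming that the conjugated determinant $\det(\overline{s(a_k,t_l)})_{k,l}$ equals $\overline{D(\a,\t)}$, without any spurious sign or transpose, which follows since conjugation is entrywise and commutes with the determinant; and (ii) verifying the integrability needed to apply the identity, which is immediate from the boundedness of $s(\cdot,t)$ on $\T$ and the finiteness of $F$. Thus the essential content of the lemma is simply the recognition that the spectral Gram determinant linearizes into a symmetrized integral whose integrand factors through the Vandermonde structure, and the rest is bookkeeping.
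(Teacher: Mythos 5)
Your proposal is correct and follows essentially the same route as the paper: the paper's proof simply derives the Andr\'eief--Heine identity inline, expanding $\det(K(a_j,a_k))$ over permutations and symmetrizing in $\t$ to reach $\frac{1}{n!}\int_{\T^n}|D(\a,\t)|^2\,F^{\otimes n}(d\t)$, exactly the expression you obtain by citing the identity directly. After that, both arguments conclude identically via the Cauchy factorization \eqref{eq:cauchy_identity2}.
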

\begin{proof} By \eqref{eq:Szw}
and the symmetrization of the variable $\t$, we see that 
\begin{align*}
\det(K(a_j,a_k))_{j,k=1}^n
&= \sum_{\sigma \in \cS_n} \sgn(\sigma) 
\int_{\T^{n}} \prod_{k=1}^n s(a_{\sigma(k)},t_k) \cdot 
\overline{\prod_{k=1}^n s(a_k,t_k)}
F^{\otimes n}(d\t) \\
&= \int_{\T^{n}} \det(s(a_j,t_k))_{j,k=1}^n
\overline{\Big(\prod_{k=1}^n s(a_k,t_k)\Big)}
F^{\otimes n}(d\t) \\
&=\frac{1}{n!} 
\int_{\T^{n}} 
\det(s(a_j,t_k))_{j,k=1}^n
\overline{\det(s(a_j,t_k))_{j,k=1}^n}  
F^{\otimes n}(d\t) \\
&=\frac{1}{n!} 
\int_{\T^{n}} |D(\a,\t)|^2 
F^{\otimes n}(d\t). 
\end{align*}
The assertion follows directly from the identity \eqref{eq:cauchy_identity2}. 
\end{proof}

\begin{rem}\label{rem:DPPonT}
Let $\cU(n)$ denote the compact Lie group of $n \times n$
 unitary matrices and let $P_n$ be the Haar measure on it. 
The pair $(\cU(n), P_n)$ is called the circular unitary ensemble CUE$_n$. 
The eigenvalues of CUE$_n$ are known to form the $n$-point DPP 
on $\T$ associated with the rank $n$ projection kernel 
\[
 \tilde{k}_n(\theta, \phi) 
=\sum_{k=0}^{n-1}  e^{i k\theta} \overline{e^{ik\phi}}
= u(\theta) \frac{\sin
 \frac{n(\theta-\phi)}{2}}{\sin\frac{\theta-\phi}{2}}
 \overline{u(\phi)} 
=: u(\theta) k_n(\theta, \phi)
 \overline{u(\phi)}, 
\]
where $u(\theta) = e^{i (n-1)\theta/2}$ and with the background
 measure being the Lebesgue probability measure on $\T$. 
The probability measure on $\T^n$ with density $\mu_{\a}(\t)/\mu_{\a}(\T^n)$ can be
 regarded as the $n$-point DPP associated with the
 correlation kernel $k_n(\theta,\phi)$ and the background measure 
\[
|S(\a,t)|^2 F(dt) = \Big(\prod_{j=1}^n \frac{1}{1-r_j^2} \Big) 
\Big(\prod_{j=1}^n P(r_j, \phi_j - t) \Big) F(dt), 
\]
where $a_j = r_j e^{i\phi_j}$ and $P(r,\theta)$ is the
 Poisson kernel in \eqref{eq:poisson-kernel}. 
We denote this $n$-point DPP on $\T$ by $\P_{\a}^{(n)}$. 
\end{rem}

\begin{lem}\label{lem:palmn} 
For $\a=(a_1,a_2,\dots,a_n)$ and $\tilde{\t}
 =(t_0,t_1,\dots,t_n)$, 
\begin{align*}
\lefteqn{\det(K(a_j,a_k))_{j,k=1}^n \cdot 
K^{a_1,\dots,a_n}(z,w)} \\
&= |V(\a)|^2
\prod_{i=1}^n (z-a_i)\overline{(w-a_i)} \int_{\T^{n+1}} 
S(z,\tilde{\t}) \overline{S(w,\tilde{\t})}
 \mutilde_{\a}(d\tilde{\t}) \\
&=|V(\a)|^2 \mutilde_{\a}(\T^{n+1})
\prod_{i=1}^n (z-a_i)\overline{(w-a_i)} \cdot K_{\a}^{(n+1)}(z,w) 
\end{align*}
where 
\[
\mutilde_{\a}(d\tilde{\t}) 
= \frac{1}{(n+1)!}|V(e^{-it_0}, e^{-it_1}, \dots, e^{-it_n})|^2 
|S(\a,\tilde{\t})|^2
 F^{\otimes(n+1)}(d\tilde{\t}). 
\]
and 
\[
K_{\a}^{(n+1)}(z,w) = \E_{\a}^{(n+1)}[S(z,\tilde{\t})
 \overline{S(w,\tilde{\t})}],  
\]
where $\E_{\a}^{(n+1)}$ is the expectation with respect to
 $\P_{\a}^{(n+1)}$ defined as in Remark~\ref{rem:DPPonT}. 
\end{lem}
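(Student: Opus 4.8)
The plan is to mimic the proof of Lemma~\ref{lem:detK}, now applied to a bordered $(n+1)\times(n+1)$ matrix. The starting point is the determinantal form \eqref{eq:palm_kernel-2}, which upon clearing the denominator reads
\[
\det(K(a_j,a_k))_{j,k=1}^n \cdot K^{a_1,\dots,a_n}(z,w) = \det\big(K(b_j,c_k)\big)_{j,k=0}^n ,
\]
where I set $\mathbf{b}=(z,a_1,\dots,a_n)$ for the row points and $\mathbf{c}=(w,a_1,\dots,a_n)$ for the column points, so that $b_0=z$, $c_0=w$ and $b_i=c_i=a_i$ for $i\ge 1$; this matches the convention ``$a_0=z$ or $w$'' in \eqref{eq:palm_kernel-2}. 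Everything then reduces to evaluating this $(n+1)\times(n+1)$ determinant via the spectral representation.

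Next I would substitute \eqref{eq:Szw} into each entry, writing $K(b_j,c_k)=\int_{\T}s(b_j,t)\overline{s(c_k,t)}\,F(dt)$, and expand the determinant as a signed sum over $\cS_{n+1}$ with an independent integration variable $t_k$ attached to each column $k=0,1,\dots,n$. Collecting the permutation sum into a single determinant gives the factor $D(\mathbf{b},\tilde{\t})=\det(s(b_j,t_k))_{j,k=0}^n$, leaving a bare conjugated product $\overline{\prod_{k=0}^n s(c_k,t_k)}$. Since $D(\mathbf{b},\tilde{\t})$ is alternating in $\tilde{\t}$ and $F^{\otimes(n+1)}$ is symmetric, I antisymmetrize the bare product, replacing it by $\tfrac{1}{(n+1)!}\overline{D(\mathbf{c},\tilde{\t})}$ exactly as in the proof of Lemma~\ref{lem:detK}, to obtain
\[
\det\big(K(b_j,c_k)\big)_{j,k=0}^n=\frac{1}{(n+1)!}\int_{\T^{n+1}} D(\mathbf{b},\tilde{\t})\,\overline{D(\mathbf{c},\tilde{\t})}\,F^{\otimes(n+1)}(d\tilde{\t}).
\]

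Then I would apply the Cauchy factorization \eqref{eq:cauchy_identity2} to both $D(\mathbf{b},\tilde{\t})$ and $D(\mathbf{c},\tilde{\t})$. The shared factor $|V(e^{-it_0},\dots,e^{-it_n})|^2$ is precisely the one appearing in $\mutilde_{\a}$, while the point-Vandermonde factors combine into $V(\mathbf{b})\overline{V(\mathbf{c})}$. The key bookkeeping step is to peel off the new zeroth coordinate: from $V(a_1,\dots,a_n)=\prod_{1\le j<k\le n}(a_k-a_j)$ one gets $V(z,a_1,\dots,a_n)=(-1)^n V(\a)\prod_{i=1}^n(z-a_i)$ and similarly with $w$, so the two signs $(-1)^n$ cancel in the product and $V(\mathbf{b})\overline{V(\mathbf{c})}=|V(\a)|^2\prod_{i=1}^n(z-a_i)\overline{(w-a_i)}$. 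In parallel, the all-pairs products split off the zeroth point as $S(\mathbf{b},\tilde{\t})=S(z,\tilde{\t})S(\a,\tilde{\t})$ and $S(\mathbf{c},\tilde{\t})=S(w,\tilde{\t})S(\a,\tilde{\t})$, whence $S(\mathbf{b},\tilde{\t})\overline{S(\mathbf{c},\tilde{\t})}=S(z,\tilde{\t})\overline{S(w,\tilde{\t})}\,|S(\a,\tilde{\t})|^2$.

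Assembling these factorizations, the $\tilde{\t}$-independent prefactor $|V(\a)|^2\prod_{i=1}^n(z-a_i)\overline{(w-a_i)}$ pulls out of the integral, and the remaining integrand is $S(z,\tilde{\t})\overline{S(w,\tilde{\t})}$ weighted by $\tfrac{1}{(n+1)!}|V(e^{-it_0},\dots,e^{-it_n})|^2|S(\a,\tilde{\t})|^2 F^{\otimes(n+1)}(d\tilde{\t})=\mutilde_{\a}(d\tilde{\t})$, which yields the first asserted equality. The second equality is then immediate from Remark~\ref{rem:DPPonT}: normalizing the measure by its total mass converts the integral into $\mutilde_{\a}(\T^{n+1})\,\E_{\a}^{(n+1)}[S(z,\tilde{\t})\overline{S(w,\tilde{\t})}]=\mutilde_{\a}(\T^{n+1})K_{\a}^{(n+1)}(z,w)$. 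I expect the only genuinely delicate point to be the sign and index bookkeeping in this Vandermonde/$S$ factorization---tracking that the cofactor of the bordering row and column is $V(\a)$ up to the sign $(-1)^n$, and that $\mathbf{b}$ and $\mathbf{c}$ differ only in their zeroth coordinate---since the symmetrization step itself is routine given Lemma~\ref{lem:detK}.
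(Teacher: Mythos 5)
Your proposal is correct and follows essentially the same route as the paper's proof: both reduce the left-hand side to the bordered $(n+1)\times(n+1)$ determinant via \eqref{eq:palm_kernel-2}, symmetrize exactly as in Lemma~\ref{lem:detK} to get $\frac{1}{(n+1)!}\int_{\T^{n+1}} D(\cdot,\tilde{\t})\overline{D(\cdot,\tilde{\t})}\,F^{\otimes(n+1)}(d\tilde{\t})$ with $a_0=z$ in one factor and $a_0=w$ in the other, and then apply the Cauchy factorization \eqref{eq:cauchy_identity2}, with the same $(-1)^n$ sign cancellation when peeling the bordering coordinate off the Vandermonde (the paper keeps the sign inside as $\prod_{j=1}^n(a_j-z)$, which is equivalent to your $(-1)^nV(\a)\prod_{i=1}^n(z-a_i)$). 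Your observation that the second displayed equality is just the normalization of $\mutilde_{\a}$ by its total mass also matches the paper, which treats it as immediate from the definition of $\P_{\a}^{(n+1)}$.
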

\begin{proof} 
As in the previous calculation in the proof of
 Lemma~\ref{lem:detK}, by \eqref{eq:palm_kernel-2}, it follows that 
\begin{align*}
Q_n(z,w)
&:=\det(K(a_j,a_k))_{j,k=1}^n K^{a_1,\dots,a_n}(z,w) \\
&=\det(K(a_j,a_k))_{j,k=0}^n\Big|_{\text{$a_0=z$ or $w$}} \\
&=\frac{1}{(n+1)!} 
\int_{\T^{n+1}} 
\det(s(a_i,t_j))_{i,j=0}^n\Big|_{a_0=z} \cdot 
\overline{\det(s(a_i,t_j))_{i,j=0}^n}\Big|_{a_0=w} 
F^{\otimes n+1}(dt_0 d\t). 
\end{align*}
Setting $\tilde{\a} = (a_0,\a)$, by
 \eqref{eq:cauchy_identity}, we have 
\begin{align*}
\det(s(a_j,t_k))_{j,k=0}^n \Big|_{a_0=z}
&= V(\tilde{\a}) V(e^{-it_0}, e^{-it_1}, \dots, e^{-it_n}) S(\tilde{\a}, \tilde{\t})\Big|_{a_0=z} \\
&= \prod_{j=1}^n (a_j-z) \cdot V(\a) 
V(e^{-it_0}, e^{-it_1}, \dots, e^{-it_n}) 
S(z,\tilde{\t}) S(\a,\tilde{\t}).  
\end{align*}
Hence, we obtain 
\begin{align*}
Q_n(z,w)
&=\frac{1}{(n+1)!} |V(\a)|^2
\prod_{j=1}^n (z-a_j)\overline{(w-a_j)} \\
&\quad \times \int_{\T^{n+1}} 
S(z,\tilde{\t}) \overline{S(w,\tilde{\t})}
\cdot |V(e^{-it_0}, e^{-it_1}, \dots, e^{-it_n})|^2 
|S(\a,\tilde{\t})|^2 F^{\otimes(n+1)}(d\tilde{\t}).  
\end{align*}
This completes the proof. 
\end{proof}

Now we compute the $n$-correlation function $\rho_n(\a)$
using Proposition~\ref{prop:prop6.1}, Lemmas~\ref{lem:detK} and \ref{lem:palmn}. 

\begin{thm}\label{thm:n-correlation} 
For pairwise distinct $\a=(a_1,a_2,\dots,a_n) \in \D^n$, 
\begin{align*}
 \rho_n(\a) 
&= 
\frac{1}{\pi^n} |V(\a)|^2 \frac{\mutilde_{\a}(\T^{n+1})^n}{\mu_{\a}(\T^n)^{n+1}} 
\per (K_{\a}^{(n+1)}(a_p,a_q))_{p,q=1}^n.  
\end{align*}
\end{thm}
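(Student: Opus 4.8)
The plan is to start from the general formula of Proposition~\ref{prop:prop6.1}, substitute the closed forms from Lemmas~\ref{lem:detK} and \ref{lem:palmn} into the conditional kernel, and then carry out the mixed derivative $\partial_z\partial_{\wbar}$ directly, exploiting the fact that a certain polynomial factor vanishes at the evaluation points. After that the computation reduces to multilinear bookkeeping for the permanent together with the product identity \eqref{eq:prodap-aj}.

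First I would combine Lemmas~\ref{lem:detK} and \ref{lem:palmn}: dividing the identity of Lemma~\ref{lem:palmn} by $\det(K(a_j,a_k))_{j,k=1}^n = |V(\a)|^2\mu_{\a}(\T^n)$ and writing $g(z) = \prod_{i=1}^n(z-a_i)$ together with the constant $C := \mutilde_{\a}(\T^{n+1})/\mu_{\a}(\T^n)$, I would record
\[
K^{a_1,\dots,a_n}(z,w) = C\, g(z)\,\overline{g(w)}\, K_{\a}^{(n+1)}(z,w) \qquad (z,w\in\D),
\]
where $C$ is independent of $z$ and $w$. Here $g(z)$ is a polynomial and $K_{\a}^{(n+1)}(z,w) = \E_{\a}^{(n+1)}[S(z,\tilde{\t})\overline{S(w,\tilde{\t})}]$ is holomorphic in $z$ and antiholomorphic in $w$, since $S(z,\tilde{\t})$ is holomorphic in $z$. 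This holomorphy is what lets $\partial_z$ and $\partial_{\wbar}$ act cleanly on the respective factors.

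Next I would apply $\partial_z\partial_{\wbar}$ to this product and evaluate at $z=a_p$, $w=a_q$. The crucial point is that $g(a_p)=0$ and $\overline{g(a_q)}=0$, so in the Leibniz expansion every term that retains a factor $g(z)$ or $\overline{g(w)}$ is killed upon evaluation; the only survivor is the term in which $\partial_z$ falls on $g(z)$ and $\partial_{\wbar}$ falls on $\overline{g(w)}$. Hence
\[
\partial_z\partial_{\wbar}K^{a_1,\dots,a_n}(z,w)\Big|_{z=a_p,\,w=a_q} = C\, g'(a_p)\,\overline{g'(a_q)}\, K_{\a}^{(n+1)}(a_p,a_q),
\]
with $g'(a_p) = \prod_{i\neq p}(a_p-a_i)$. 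Because each matrix entry now factors as the row factor $C\,g'(a_p)$ times the column factor $\overline{g'(a_q)}$ times $K_{\a}^{(n+1)}(a_p,a_q)$, multilinearity of the permanent pulls these out:
\[
\per\big(\partial_z\partial_{\wbar}K^{a_1,\dots,a_n}(a_p,a_q)\big)_{p,q=1}^n = C^n\Big|\prod_{p=1}^n g'(a_p)\Big|^2 \per\big(K_{\a}^{(n+1)}(a_p,a_q)\big)_{p,q=1}^n.
\]
The product $\prod_{p} g'(a_p) = \prod_{p\neq i}(a_p-a_i)$ is recognized through \eqref{eq:prodap-aj} as $(-1)^{\binom{n}{2}}V(\a)^2$, so $|\prod_p g'(a_p)|^2 = |V(\a)|^4$.

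Finally I would divide by $\det(\pi K(a_j,a_k))_{j,k=1}^n = \pi^n|V(\a)|^2\mu_{\a}(\T^n)$ (again Lemma~\ref{lem:detK}) and insert $C = \mutilde_{\a}(\T^{n+1})/\mu_{\a}(\T^n)$; the factor $|V(\a)|^4/|V(\a)|^2 = |V(\a)|^2$ and the powers $\mu_{\a}(\T^n)^n\cdot\mu_{\a}(\T^n) = \mu_{\a}(\T^n)^{n+1}$ collapse to exactly the asserted expression. The only step requiring genuine care is the differentiation: one must verify that the holomorphy of $K_{\a}^{(n+1)}$ in $z$ and antiholomorphy in $w$ justify the term-by-term derivative, and that the vanishing $g(a_p)=\overline{g(a_q)}=0$ is precisely what eliminates all but the single surviving term. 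The sign $(-1)^{\binom{n}{2}}$ from \eqref{eq:prodap-aj} is harmless since it enters only through the modulus $|V(\a)|^2$, and the remaining manipulation of the permanent and the measure-mass factors is routine.
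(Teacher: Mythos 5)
Your proposal is correct and takes essentially the same route as the paper: both substitute Lemmas~\ref{lem:detK} and \ref{lem:palmn} into Proposition~\ref{prop:prop6.1}, observe that at $z=a_p$, $w=a_q$ the only surviving Leibniz term is the one where $\partial_z$ and $\partial_{\wbar}$ both hit the vanishing prefactor $\prod_{i}(z-a_i)\overline{(w-a_i)}$, and then extract the row and column factors from the permanent using \eqref{eq:prodap-aj}. The only cosmetic difference is that you divide by $\det(K(a_j,a_k))_{j,k=1}^n$ at the outset to isolate $K^{a_1,\dots,a_n}(z,w)$, whereas the paper carries $Q_n(z,w)=\det(K(a_j,a_k))_{j,k=1}^n\,K^{a_1,\dots,a_n}(z,w)$ through the computation and divides by $\pi^n(\det K)^{n+1}$ at the end.
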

\begin{proof} By Lemma~\ref{lem:palmn}, we note that 
\begin{align*}
\lefteqn{\partial_z\partial_{\wbar} Q_n(z,w))|_{z=a_p, w=a_q} }\\
&= |V(\a)|^2 \mutilde_{\a}(\T^{n+1}) \left(
\prod_{i=1 \atop{i \not = p}}^n (a_p-a_i)
\overline{\prod_{j=1 \atop{j\not=q}}^n (a_q-a_j)} \right) 
K_{\a}^{(n+1)}(a_p,a_q). 
\end{align*}
Then, we see that 
\begin{align*}
\lefteqn{\per((\partial_z\partial_{\wbar} Q_n(z,w))|_{z=a_p,
 w=a_q})}\\
&= \left\{|V(\a)|^2 \mutilde_{\a}(\T^{n+1})\right\}^{n} 
\sum_{\sigma \in \cS_n} \prod_{p=1}^n 
\left(\prod_{i=1 \atop{i \not = p}}^n (a_p-a_i)
\overline{\prod_{j=1 \atop{j\not=\sigma(p)}}^n
 (a_{\sigma(p)}-a_j)} \right) K_{\a}^{(n+1)}(a_p,a_{\sigma(p)}) \\
&= |V(\a)|^{2n+4} \mutilde_{\a}(\T^{n+1})^n 
\per\big( K_{\a}^{(n+1)}(a_p,a_q)_{p,q=1}^n \big). 
\end{align*}
Here, we used \eqref{eq:prodap-aj} in the second equality.  
Therefore, by Lemma~\ref{lem:detK}, we obtain 
\begin{align*}
 \rho_n(\a) 
&= \frac{\per((\partial_z\partial_{\wbar} K^{a_1,\dots,a_n}(z,w))|_{z=a_p,
 w=a_q})}{\det(\pi K)} 
= \frac{\per((\partial_z\partial_{\wbar} Q_n(z,w))|_{z=a_p,
 w=a_q})}{\pi^n (\det K)^{n+1}} \\
&= \frac{|V(\a)|^2 \mutilde_{\a}(\T^{n+1})^n}{\pi^n \mu_{\a}(\T^n)^{n+1}} \per(K_{\a}^{(n+1)}(a_p,a_q)_{p,q=1}^n). 
\end{align*}
This is the desired formula.  
\end{proof}

\begin{rem}
 Theorem~\ref{thm:n-correlation} shows that the
 $n$-correlation function includes the factor of squared Vandermonde determinant
 $|V(\a)|^2$. 
For $\a=(a_1,\dots,a_n)$, we consider a monic polynomial
 $p(z) = \prod_{k=1}^n (z-a_k)$ and suppose it has the coeffcients $e_k, 
 k=0,1,\dots, n-1$, i.e., $p(z) = \sum_{k=0}^{n} e_k z^k$
 with $e_n=1$. Then the transformation $T : \C^n \to
 \C^n$ defined by $T(a_1,a_2,\dots, a_n) = (e_{n-1},e_{n-2},
 \dots, e_0)$ has Jacobian determinant $|V(\a)|^2$
 (cf. \cite{HKPV09}), which is reflected in the theorem. See also
 \cite{NS12, GP17}. 
\end{rem}

\subsection{Reproducing formula} 
We present a multi-dimensional reproducing formula of the
Cauchy type, which appears in the computation in
Section~\ref{subsec:hyperbolicGAF}. 

\begin{lem}
Let 
\[
C(\z,\a) := \prod_{k=1}^n \prod_{j=1}^n \frac{1}{z_k-a_j}. 
\]
Then, for an analytic function $Q(\z)$ that is symmetric in its variables,  
\begin{equation}
\left(\frac{1}{2\pi i}\right)^{n} \oint_{C^n} V(\z)^2 C(\z,\a) Q(\z)
\prod_{k=1}^n dz_k = (-1)^{{n \choose 2}}  n! \cdot Q(\a),  
\label{eq:reproducing-VC} 
\end{equation}
and for pairwise distinct $\a=(a_1,\dots,a_n)$, 
\begin{align}
\lefteqn{
\left(\frac{1}{2\pi i}\right)^{n} \oint_{C^n} V(\z)^2 C(\z,\a) Q(\z)
\prod_{k=1}^n \frac{dz_k}{z_k} } \nonumber \\ 
&= (-1)^{{n \choose 2}}  n! \cdot \prod_{j=1}^n 
 \frac{1}{a_j} \cdot 
\Big\{
Q(\a) 
- \sum_{p=1}^n 
\prod_{k=1 \atop{k\not= p}}^n\frac{a_k}{a_k-a_p}
Q(0,\a^{(p)}) \Big\}, 
\label{eq:reproducing-VC-2} 
\end{align}
where $C = \{z \in \C : |z|=1\}$ and $\a^{(p)} =
 (a_1,\dots,a_{p-1}, a_{p+1},\dots, a_n)$.
\end{lem}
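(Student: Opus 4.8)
The plan is to reduce both identities to iterated applications of Cauchy's integral formula, after using the Cauchy determinant identity to linearize the squared Vandermonde factor. The first step is to rewrite the kernel $V(\z)^2 C(\z,\a)$. The Cauchy determinant identity \eqref{eq:cauchy_identity}, transcribed into the variables $z_j - a_k$, gives the relation $\det\big(1/(z_j-a_k)\big)_{j,k=1}^n = (-1)^{{n \choose 2}} V(\z)V(\a)\, C(\z,\a)$, and hence $V(\z)^2 C(\z,\a) = \frac{(-1)^{{n \choose 2}}}{V(\a)} V(\z)\det\big(1/(z_j-a_k)\big)_{j,k=1}^n$. Here I assume the $a_j$ pairwise distinct so that $V(\a)\neq 0$; \eqref{eq:reproducing-VC} then extends to arbitrary $\a$ by continuity. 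Next I would expand the determinant by the Leibniz rule and exploit that $Q(\z)$ is symmetric in $\z$ (and, in the second case, so is the extra factor $\prod_k z_k^{-1}$), while $V(\z)$ is alternating: relabelling the integration variables in each of the $n!$ permutation terms makes the two sign factors $\sgn\sigma$ cancel, so every term contributes equally. This replaces the determinant by $n!\prod_{j}1/(z_j-a_j)$. Thus in both cases the integral becomes $\frac{(-1)^{{n \choose 2}}n!}{V(\a)}$ times an integral whose integrand has a pole structure that factorizes across the variables. Since $0$ and all $a_j$ lie inside $C=\{|z|=1\}$, the residue theorem applies directly.

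For \eqref{eq:reproducing-VC} the remaining integrand is $V(\z)Q(\z)\prod_j 1/(z_j-a_j)$, and as a function of each $z_j$ the only pole inside $C$ is the simple pole at $z_j=a_j$. Integrating the variables one at a time via Fubini and Cauchy's formula simply evaluates $\z$ at $\a$, producing $V(\a)Q(\a)$; the factor $V(\a)$ then cancels and leaves $(-1)^{{n \choose 2}}n!\,Q(\a)$, as required.

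For \eqref{eq:reproducing-VC-2} the extra factor $\prod_k z_k^{-1}$ introduces, for each variable, a second simple pole at $z_k=0$. Now the single-variable contour integral acts as the operator $\frac{1}{a_j}(E_{a_j}-E_0)$, where $E_{a_j}$ and $E_0$ substitute $z_j=a_j$ and $z_j=0$; these commute (they act on different variables), so the $n$-fold integral equals $\prod_j a_j^{-1}$ times $\prod_{j=1}^n(E_{a_j}-E_0)$ applied to $V(\z)Q(\z)$. Expanding this product over subsets $S\subseteq\{1,\dots,n\}$, the key observation is that any term setting two or more variables to $0$ vanishes, because $V(\z)$ then has two equal arguments; hence only the all-$a$ term and the $n$ terms with exactly one variable set to $0$ survive. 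The all-$a$ term contributes $V(\a)Q(\a)$, while the term with $z_p=0$ contributes $-V(a_1,\dots,0,\dots,a_n)\,Q(0,\a^{(p)})$. A short computation of the Vandermonde ratio then gives $V(a_1,\dots,0,\dots,a_n)/V(\a)=\prod_{k\neq p}a_k/(a_k-a_p)$, which reproduces the bracketed expression; the prefactor $\frac{(-1)^{{n \choose 2}}n!}{V(\a)}$ combines with the surviving $V(\a)$ to yield exactly the stated right-hand side.

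The main obstacle is the bookkeeping in this last step: correctly identifying which combinations of residues survive and evaluating the Vandermonde ratio with the right sign. The symmetrization and the reduction to iterated Cauchy integrals are routine once the determinant identity is in place; the substantive content is recognizing that the squared Vandermonde forces every multi-zero residue term to vanish, which is precisely what collapses the $2^n$-term subset sum to the $n+1$ surviving terms and accounts for the distinctness hypothesis on $\a$.
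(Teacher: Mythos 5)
Your proof is correct, but it takes a genuinely different route from the paper's. The paper evaluates the $n$-fold contour integral by a direct joint-residue argument: since $V(\z)^2$ vanishes to second order on every diagonal $z_j = z_k$, the only nonzero residues occur when $(z_1,\dots,z_n)$ is a permutation of $(a_1,\dots,a_n)$ (respectively of $(0,\a^{(p)})$ for some $p$ in the second case); symmetry of the integrand produces the factor $n!$, and the residue at $\z=\a$ is then evaluated using \eqref{eq:prodap-aj}. You instead linearize the kernel first, writing $V(\z)^2 C(\z,\a) = (-1)^{\binom{n}{2}} V(\a)^{-1} V(\z)\det\bigl(1/(z_j-a_k)\bigr)$ via the translated Cauchy determinant identity (your sign bookkeeping here is right: $\det(1/(z_j-a_k)) = (-1)^{\binom{n}{2}} V(\z)V(\a)C(\z,\a)$), then remove the determinant by symmetrization --- the $\sgn\sigma$ from the Leibniz expansion cancels against the $\sgn\sigma$ picked up by the alternating factor $V(\z)$ under relabelling --- leaving the product kernel $\prod_j (z_j-a_j)^{-1}$, so everything reduces to iterated one-variable Cauchy formulas. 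In the second identity your operator factorization $\prod_j a_j^{-1}(E_{a_j}-E_0)$ applied to $V(\z)Q(\z)$, with the observation that any term setting two or more variables to $0$ dies because $V$ then has a repeated argument, correctly collapses the $2^n$ subset terms to the $n+1$ surviving ones, and your Vandermonde ratio $V(a_1,\dots,0,\dots,a_n)/V(\a) = \prod_{k\neq p} a_k/(a_k-a_p)$ is exact, sign included. As for what each approach buys: the paper's argument is shorter, while yours makes fully rigorous the paper's somewhat informal assertion about which residue configurations survive (automatic once each variable carries one simple pole, or two counting $z_j=0$), and your continuity remark legitimately extends \eqref{eq:reproducing-VC} to coinciding $a_j$'s, a case the paper's phrasing glosses over. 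One hygiene point common to both proofs: \eqref{eq:reproducing-VC-2} additionally requires each $a_j \neq 0$ (otherwise $z_j=0$ is a double pole), which is implicit in the factor $\prod_j a_j^{-1}$ on the right-hand side.
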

\begin{proof}
Due to the presence of the Vandermonde term, the residue is
 zero unless the points $(z_1, z_2, \dots, z_n)$ are all distinct and
 form a permutation of $\{a_1, \dots, a_n\}$. 
Since the integral is symmetric in $(z_1, z_2, \dots, z_n)$, it suffices
 to consider the residue at $(z_1,z_2,\dots,z_n) =
 (a_1,a_2,\dots,a_n)$ and multiply it by $n!$. The residue
 is given by 
$\disp V(\a)^2 \prod_{k=1\atop{k\not=j}}^n (a_k-a_j)^{-1}
 Q(\a)$ 
and thus, using \eqref{eq:prodap-aj}, we obtain \eqref{eq:reproducing-VC}. 

Similarly, 
the residue is zero unless the points $(z_1, z_2, \dots, z_n)$ are all distinct and
 form a permutation of $\{0, a_1, \dots, a_n\} \setminus
 \{a_p\}$ for some $p=1,2,\dots,n$. Since the
 integral is symmetric in $(z_1, z_2, \dots, z_n)$, it suffices
 to consider the residue at $\z=\a$ or $\z = (0,\a^{(p)})$
 for $p=1,2,\dots,n$ and multiply it by $n!$. 
For $\z=\a$, the first term in \eqref{eq:reproducing-VC-2}
 is obtained in the same manner as above with the extra term
 $\prod_{k=1}^n 1/a_k$. 
Since $\disp V(0,\a^{(p)}) = (\prod_{j=1 \atop{j\not= p}}^n a_j)
 V(\a^{(p)})$, the residue at $\z = (0,\a^{(p)})$ is given
 by 
\begin{align*}
\res_{\z=(0,\a^{(p)})}&=(\prod_{j=1 \atop{j\not= p}}^n a_j)^2 V(\a^{(p)})^2 
\cdot \prod_{j=1}^n \frac{1}{-a_j}
\cdot \prod_{k=1 \atop{k\not=p}}^n 
\prod_{j=1 \atop{j\not=k}}^n \frac{1}{a_k-a_j} 
\cdot \prod_{k=1 \atop{k\not= p}}^n \frac{1}{a_k} \cdot Q(0,\a^{(p)}) \\
&= (-1)^{{n \choose 2}+1} 
\prod_{j=1}^n \frac{1}{a_j}
\prod_{k=1 \atop{k\not=p}}^n \frac{a_k}{a_k-a_p}
\cdot Q(0,\a^{(p)}). 
\end{align*}
Therefore, we obtain \eqref{eq:reproducing-VC-2}. 
\end{proof}

\subsection{The case $F(dt) = dt/(2\pi)$}\label{subsec:hyperbolicGAF}

Let us consider the i.i.d. case, i.e., $F(dt) = dt/(2\pi)$
and $X_F(z) = X_{\hyp}(z)$. 
In the following lemma, by a slight abuse of notation, 
we write $S(\z,\w) = \prod_{j,k=1}^n s(z_j, w_k)
= \prod_{j,k=1}^n (1-z_j \wbar_k)^{-1}$ 
using the same symbol $S$ for the Szeg\H{o} kernel $S(\z,\t)$. 

\begin{lem}\label{lem:volume-formula} 
Suppose $F(dt) = dt/(2\pi)$. Then, 
\[
\mu_{\a}(\T^n) =  \mutilde_{\a}(\T^{n+1}) 
= S(\a,\a), \quad 
K_{\a}^{(n+1)}(z,w) = S(z,w) S(z,\a) \overline{S(w,\a)}. 
\] 
\end{lem}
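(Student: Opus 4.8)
The plan is to reduce the whole statement to the Cauchy determinant identity \eqref{eq:cauchy_identity} together with the two structural identities already available in Lemmas~\ref{lem:detK} and \ref{lem:palmn}, exploiting that for $F(dt)=dt/(2\pi)$ the covariance kernel is the Szeg\H{o} kernel $K(z,w)=s(z,w)=(1-z\overline{w})^{-1}$.

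First I would evaluate $\mu_{\a}(\T^n)$. By Lemma~\ref{lem:detK} one has $\det(K(a_j,a_k))_{j,k=1}^n=|V(\a)|^2\mu_{\a}(\T^n)$, while the Cauchy identity \eqref{eq:cauchy_identity} applied with $b_k=\overline{a_k}$ gives $\det(K(a_j,a_k))_{j,k=1}^n=V(\a)\,V(\overline{a_1},\dots,\overline{a_n})\prod_{j,k}(1-a_j\overline{a_k})^{-1}=|V(\a)|^2 S(\a,\a)$, since $V(\overline{a_1},\dots,\overline{a_n})=\overline{V(\a)}$ and $\prod_{j,k}s(a_j,a_k)=S(\a,\a)$. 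Cancelling $|V(\a)|^2$, which is legitimate for pairwise distinct $\a$, yields $\mu_{\a}(\T^n)=S(\a,\a)$.

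Next I would compute the conditional kernel directly from its determinantal form \eqref{eq:palm_kernel-2}: the bordered $(n+1)\times(n+1)$ matrix with rows indexed by $(z,a_1,\dots,a_n)$ and columns by $(w,a_1,\dots,a_n)$ is again a Cauchy matrix, so \eqref{eq:cauchy_identity} applies. Pulling out the Vandermonde factors $V(z,a_1,\dots,a_n)=\prod_{j}(a_j-z)\,V(\a)$ and its conjugate, and dividing by $\det(K(a_j,a_k))_{j,k=1}^n=|V(\a)|^2S(\a,\a)$, one obtains
\[
K^{a_1,\dots,a_n}(z,w)=\prod_{j=1}^n (z-a_j)\overline{(w-a_j)}\; s(z,w)\,S(z,\a)\,\overline{S(w,\a)}.
\]
Substituting this and $\det(K(a_j,a_k))_{j,k=1}^n=|V(\a)|^2S(\a,\a)$ into the identity of Lemma~\ref{lem:palmn} and cancelling the common factor $|V(\a)|^2\prod_{j}(z-a_j)\overline{(w-a_j)}$ leaves the single scalar relation
\[
S(\a,\a)\,s(z,w)\,S(z,\a)\,\overline{S(w,\a)}=\mutilde_{\a}(\T^{n+1})\,K_{\a}^{(n+1)}(z,w).
\]

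It then remains to pin down one of the two factors, and I would compute $\mutilde_{\a}(\T^{n+1})$. The difficulty here is a dimensional mismatch: $\mutilde_{\a}$ integrates over $\T^{n+1}$ although $\a$ carries only $n$ points, so the telescoping used for $\mu_{\a}$ does not apply verbatim. I would remove the asymmetry with a phantom point: for $\b=(a_1,\dots,a_n,b)$ with $b\notin\{a_1,\dots,a_n\}$, the definitions give $\mu_{\b}(d\tilde{\t})=\prod_{k=0}^n|s(b,t_k)|^2\,\mutilde_{\a}(d\tilde{\t})$, and the first part (now with $n+1$ points) yields $\mu_{\b}(\T^{n+1})=S(\b,\b)$. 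Letting $b\to0$, we have $s(b,t_k)\to1$ with $\prod_k|s(b,t_k)|^2$ uniformly bounded, so dominated convergence gives $\mu_{\b}(\T^{n+1})\to\mutilde_{\a}(\T^{n+1})$, while $S(\b,\b)\to S(\a,\a)$; hence $\mutilde_{\a}(\T^{n+1})=S(\a,\a)$. Since $S(\a,\a)>0$, dividing the scalar relation above gives $K_{\a}^{(n+1)}(z,w)=s(z,w)S(z,\a)\overline{S(w,\a)}=S(z,w)S(z,\a)\overline{S(w,\a)}$, as claimed. The main obstacle is precisely this mismatch in $\mutilde_{\a}$; the phantom-point limit is the device I would use to resolve it.
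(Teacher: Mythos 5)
Your proof is correct, but it takes a genuinely different route from the paper's. The paper proves all three identities by converting the torus integrals into contour integrals over $C^n$ and $C^{n+1}$ (using $\zbar_k = z_k^{-1}$ on the circle) and evaluating them with the multidimensional residue formula \eqref{eq:reproducing-VC} --- that is exactly what the reproducing-formula subsection is set up for. You never integrate: since $K_F$ is the Szeg\H{o} kernel when $F(dt)=dt/(2\pi)$, you evaluate $\det(K(a_j,a_k))_{j,k=1}^n$ and the bordered determinant in \eqref{eq:palm_kernel-2} in closed form via the Cauchy identity \eqref{eq:cauchy_identity}, then back-solve through the general Lemmas~\ref{lem:detK} and \ref{lem:palmn}; this is pure linear algebra (the cancellation of $\prod_i(z-a_i)\overline{(w-a_i)}$ off the set $\{z,w\}\cap\{a_1,\dots,a_n\}\not=\emptyset$ extends by continuity, so no issue there). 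Your phantom-point identification of $\mutilde_{\a}(\T^{n+1})$ is also valid: the density relation $\mu_{\b}(d\tilde{\t})=\prod_{k=0}^n|s(b,t_k)|^2\,\mutilde_{\a}(d\tilde{\t})$ follows at once from \eqref{eq:muadt}, dominated convergence applies since $\prod_{k=0}^n|s(b,t_k)|^2\le(1-|b|)^{-2(n+1)}$ is bounded for $|b|\le 1/2$ and $\mutilde_{\a}$ is finite, and taking $b\to 0$ through points distinct from the $a_j$ even covers the case $0\in\{a_1,\dots,a_n\}$. It is, however, heavier than necessary: the paper pins down the constant simply by setting $z=w=0$ in your scalar relation, since $s(0,t)\equiv 1$ forces $K_{\a}^{(n+1)}(0,0)=\E_{\a}^{(n+1)}[1]=1$ while the left-hand side reduces to $S(\a,\a)$. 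What each approach buys: yours is more elementary and makes the lemma logically independent of the residue machinery (which the paper also needs elsewhere, e.g.\ \eqref{eq:reproducing-VC-2}); the paper's contour computation evaluates the measures directly and showcases the reproducing formula it has just established.
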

\begin{proof} 
By \eqref{eq:muadt}, the definition of $\mu_{\a}$, we see
 that 
 \begin{align*}
n! \mu_{\a}(\T^{n}) 
&= \int_{\T^{n}} |V(e^{-it_1}, \cdots, e^{-it_n})|^2 
\prod_{k=1}^n \prod_{j=1}^n \left|\frac{1}{1-a_j
  e^{-it_k}}\right|^2 \prod_{k=1}^n \frac{dt_k}{2\pi} \\
&= \left(\frac{1}{2\pi i}\right)^{n} 
\oint_{C^{n}} |V(z_1^{-1}, \cdots, z_n^{-1})|^2 
\prod_{k=1}^n \prod_{j=1}^n \left|\frac{1}{1-a_j
  z_k^{-1}}\right|^2 \prod_{k=1}^n \frac{dz_k}{z_k} \\
&= (-1)^{{n \choose 2}} \left(\frac{1}{2\pi i}\right)^{n} 
\oint_{C^{n}} V(\z)^2 C(\z,\a)S(\z,\a) \prod_{k=1}^n dz_k, 
 \end{align*}
where we used $\zbar_k = z_k^{-1}$ on $C$ and the following identity: 
\[
 V(z_1^{-1}, \cdots, z_n^{-1}) = (-1)^{{n \choose 2}} 
\frac{V(z_1, \cdots, z_n)}{(z_1 \cdots z_n)^{n-1}}. 
\]
Therefore, by \eqref{eq:reproducing-VC}, we have
 $\mu_{\a}(\T^n) = S(\a,\a)$. 

Similarly as above, setting $\ztilde = (z_0,z_1,\dots,z_n)$
 and $\atilde_z=(z,\a) = (z,a_1,\dots,a_n)$ and using \eqref{eq:reproducing-VC}, we have 
 \begin{align*}
\lefteqn{\mutilde_{\a}(\T^{n+1}) K_{\a}^{(n+1)}(z,w)}\\
&= \int_{\T^{n+1}} S(z,\tilde{\t})
  \overline{S(w,\tilde{\t})} \mutilde_{\a}(d\tilde{\t}) \\
&=\frac{1}{(n+1)!}\int_{\T^{n+1}} |V(e^{-it_0}, \cdots, e^{-it_n})|^2 
\prod_{k=0}^n \prod_{j=1}^n \left|\frac{1}{1-a_j
  e^{-it_k}}\right|^2 \\ 
&\quad \times \prod_{k=0}^n \frac{1}{1-z e^{-it_k}}
  \overline{\frac{1}{1-w e^{-it_k}}} 
\prod_{k=0}^n \frac{dt_k}{2\pi} \\
&= (-1)^{{n+1 \choose 2}} \frac{1}{(n+1)!} 
\left(\frac{1}{2\pi i}\right)^{n+1} 
\oint_{C^{n+1}} V(\ztilde)^2 C(\ztilde, \atilde_z)
  S(\ztilde,\atilde_w)\prod_{k=0}^n dz_k \\
&= S(\atilde_z, \atilde_w). 
\end{align*}
Hence, $K_{\a}^{(n+1)}(z,w) = S(\atilde_z,
 \atilde_w)/\mutilde_{\a}(T^{n+1})$.  
Since the left-hand side is $1$ when $z=w=0$ 
by the definition of $K_{\a}^{(n+1)}(z,w)$, we see that $\mutilde_{\a}(\T^{n+1}) =
S(\tilde{\a}_0,\tilde{\a}_0) = S(\a,\a)$, and thus, 
$K_{\a}^{(n+1)}(z,w) = S(z,w) S(z,\a) \overline{S(w,\a)}$. 
\end{proof}

Now we compute the $n$-correlation function of the zeros for
$X_{\hyp}(z)$. 
By Theorem~\ref{thm:n-correlation} together with Lemma~\ref{lem:volume-formula}, we obtain 
\begin{align*}
\rho_n(\a) 
&= 
\frac{1}{\pi^n} |V(\a)|^2 \frac{1}{S(\a,\a)} 
\per (S(a_p,a_q) S(a_p,\a) \overline{S(a_q,\a)})_{p,q=1}^n \\
&= 
\frac{1}{\pi^n} |V(\a)|^2 S(\a,\a) 
\per (S(a_p,a_q))_{p,q=1}^n \\  
&= 
\frac{1}{\pi^n} 
\det (S(a_p,a_q))_{p,q=1}^n \cdot \per (S(a_p,a_q))_{p,q=1}^n \\
&= 
\frac{1}{\pi^n} 
\det (S(a_p,a_q)^2)_{p,q=1}^n.  
\end{align*}
We used the Cauchy determinant formula
 \eqref{eq:cauchy_identity} for the third equality and 
Borchardt's identity (see \cite[Proposition 5.1.5]{HKPV09})
\[
 \det\Big(\frac{1}{1-a_p b_q}\Big)_{p,q=1}^n \per\Big(\frac{1}{1-a_p
 b_q}\Big)_{p,q=1}^n
= \det\Big(\frac{1}{(1-a_p b_q)^2} \Big)_{p,q=1}^n 
\]
 for the fourth equality. 
We recovered the fact that the zeros of $X_{\hyp}(z)$ is the
 determinantal point process associated with the Bergman
 kernel $S(z,w)^2$. 

\section{Concluding remarks} 

We have discussed GAFs corresponding to three spectral
measures as examples in Section~\ref{sec:examples}. In
particular, for the polynomial approximation of the final example of GAFs, the zeros in
the left half-plane exhibited somewhat different behavior
compared to those in the right half-plane. Further detailed
analysis of this phenomenon is left as a topic for future
research.
The integral representation of the $n$-point correlation
functions in the final section is used only in the proof of
the Peres-Vir\'{a}g theorem. It may also have potential
applications in the detailed analysis of the two-point or
higher order correlation functions of the zeros of GAFs,
particularly for determining whether they exhibit negative
correlations. 
Other topics such as central limit theorems \cite{B15, KN21,
BN22}, hole probability \cite{K06, BNPS18}, rigidity
\cite{GP17, GK21}, and hyperuniformity \cite{HKR22} would
also be interesting to investigate in the context of our setting.

The complex autoregressive model $\mathrm{AR}(1)$ model
$\{X_t\}_{t \in \Z}$ is defined as the following stochastic difference equation
\begin{equation}
 X_t = z X_{t-1} + \zeta_t \quad (t \in \Z) 
\label{eq:discreteOU}
\end{equation}
with i.i.d. complex-valued noise. 
It is known that when $|z| < 1$, there is a unique
stationary solution given by 
\begin{equation}
X_t= X_t(z) = \sum_{k=0}^{\infty} \zeta_{t-k} z^k  \quad (t \in
 \Z). 
\label{eq:stationary-sol}
\end{equation}
If the noise $\{\zeta_t\}_{t \in \Z}$ is an i.i.d. sequence of
$\{-1,1\}$-Bernoulli random variables, for each $t \in
\Z$, the marginal $X_t(\lambda)$ is equal in law to
the so-called Bernoulli convolution $\sum_{k=0}^{\infty} \pm
\lambda^k$, whose distribution $\nu_{\lambda}$ is either absolutely
continuous or purely singular \cite{JW35} and many
properties of $\nu_{\lambda}$ have been studied (cf. \cite{S95, PSS00}).  
If the noise $\{\zeta_t\}_{t \in \Z}$ is an i.i.d. sequence of
standard complex normal random variables, for each $t \in
\Z$, the marginal $\{X_t(z)\}_{z \in \D}$ is equal in law to
the hyperbolic GAF $\{X_{\hyp}(z)\}_{z \in \D}$. 
Then $\{X_t\}_{t \in \Z}$ defines the GAF-valued
stationary process and thus, for each $t$, 
we have the DPP-valued stationary process $\{\cZ_{X_t}\}_{t \in \Z}$.  
If we replace the i.i.d. Gaussian noise with a dependent
Gaussian noise, the stationary solution to
\eqref{eq:discreteOU} still exists when $|z|<1$ and is again given by 
\eqref{eq:stationary-sol}, which 
defines the GAF-valued stationary process. 
The one-dimensional marginal $\{X_t(z)\}_{z \in \D}$ is
identical in law to the one discussed earlier in this paper. 
It would be intriguing to explore the above-mentioned
stationary GAF-valued processes and their zero processes.  

\vskip 1cm 
\noindent
\textbf{Acknowledgment.}  
A part of this study was presented at the conference ``Random
Matrices and Related Topics in Jeju'', held from May 6–10,
2024, on Jeju Island, Korea.
The author would like to sincerely thank Sung-Soo Byun, Nam-Gyu Kang, and Kyeongsik Nam for
organizing such a wonderful conference and their warm hospitality.
The author was supported by JSPS KAKENHI Grant Numbers JP22H05105,
JP23H01077 and JP23K25774, and also supported in part JP21H04432
and JP24KK0060. 

\bibliographystyle{plain}
\bibliography{referenceGAF} 

\end{document}